\newtheorem{theorem}{Theorem}[section]
\newtheorem{lemma}[theorem]{Lemma}
\newtheorem{proposition}[theorem]{Proposition}
\newtheorem{corollary}[theorem]{Corollary}
\theoremstyle{definition}
\theoremstyle{remark}
\numberwithin{equation}{section}
\newcommand{\N}{\mathbb{N}}
\newcommand{\R}{\mathbb{R}}
\title{One-Sided Derivative of Distance to a Compact Set}
\author{Logan S. Fox\footnote{Fariborz Maseeh Dept. of Math. and Stat., Portland State University; contact: logfox@pdx.edu}, Peter Oberly\footnote{Department of Mathematics, Oregon State University; contact: oberlyp@oregonstate.edu}, J.J.P. Veerman\footnote{Fariborz Maseeh Dept. of Math. and Stat., Portland State University; contact: veerman@pdx.edu} }
\date{\today}
\begin{document}

\maketitle

\begin{abstract}
We give a complete and self-contained proof of a folklore theorem which says that in an Alexandrov
space the distance between a point $\gamma(t)$ on a geodesic $\gamma$ and a compact set $K$ is a right-differentiable function of $t$. Moreover, the value of this right-derivative is given by 
the negative cosine of the minimal angle between the geodesic and any shortest path to the 
compact set (Theorem \ref{thm:compact set derivative}). 
Our treatment serves as a general introduction to metric geometry and relies only on the basic elements, such as comparison triangles and upper angles.
\end{abstract}

\section{Introduction} 

Let \((X,d)\) be a metric space. Given a compact set \(K \subseteq X\) and a geodesic \(\gamma:[0,T]\to X\), the distance from \(\gamma\) to \(K\) at any given time is defined by the function 
\[\ell(t) = d(\gamma(t),K) . \] 
In an Alexandrov space (see Section \ref{alexandrov spaces} for definition), if we replace the compact set with a point, \(K = \{p\}\), it is well known that 
\begin{equation}\label{limit}
\lim_{t\to 0^+} \frac{\ell(t) -\ell(0)}{t} = -\cos(\angle_{\min}) 
\end{equation} 
where \(\angle_{\min}\) is the infimum of angles between \(\gamma\) and any distance minimizing path connecting \(\gamma(0)\) to \(p\). This result is commonly known as the First Variation Formula (after the similar result for Riemannian manifolds) and can be found in \cite[Proposition 3.3]{abn86}, \cite[Corollary II.3.6]{bridson}, \cite[Corollary 4.5.7]{bbi2001}, and \cite[Corollary 62]{plaut}.

It is asserted, in publications such as \cite[Exercise 4.5.11]{bbi2001} and \cite[Example 11.4]{bgp92}, that the first variation formula (\ref{limit}) still holds for the distance to an arbitrary compact set \(K\), with \(\angle_{\min}\) representing the infimum of angles between \(\gamma\) and any distance minimizing path connecting \(\gamma(0)\) to \(K\). 
However, neither of these sources (\cite{bbi2001} and \cite{bgp92}) provide a proof. 
On the other hand, there is a proof of the first variation formula in \cite[Proposition 9.4]{lytchak}, which is further generalized to hold in a class of geometric metric spaces broader than Alexandrov spaces. This generalization, however, is achieved at the cost of some very technical machinery. 
Namely, it involves constructing a tangent space at every point \(p\) via the ultralimit of blow-ups of the pointed space \((X,p)\). 
Further, the angle between geodesics may not be well defined in this context, so Lytchak is required to make use of Busemann functions to form a metric on this tangent space, much like the law of cosines in the Euclidean case. 
In this more general formulation, the right-hand side of (\ref{limit}) is replaced by a Busemann function. 

Our goal is to present a complete and self-contained proof of (\ref{limit}), which relies solely on the fundamentals of metric geometry. As such, this article may also serve as a gentle yet rigorous introduction to the theory of Alexandrov spaces.
Our approach is largely based on techniques presented in \cite{bbi2001}, with insights taken from \cite{bridson}, \cite{plaut}, \cite{shiohama}, and others.

\subsection{Background and Motivation} 

Loosely speaking, an Alexandrov space is metric space which satisfies enough structural requirements for some classic geometric notions such as geodesics, angles, and curvature make sense. In this context, curvature is based on a local bound, which is obtained through comparison to one of the two-dimensional space forms - hyperbolic, spherical, or Euclidean space. 

The importance of Alexandrov spaces can be seen through a few examples. First and foremost, all Riemannian manifolds are in fact Alexandrov spaces. 
On the other hand, limits of Riemannian manifolds (in the Gromov-Hausdorff metric) with lower-bounded curvature and upper-bounded diameter are not necessarily Riemannian manifolds, but are always Alexandrov spaces (this is Gromov's compactness theorem). A simple example is the surface of an \(n\)-dimensional cube in \(\R^{n+1}\). The surface of a cube is clearly not a smooth manifold, as it has sharp corners and edges; however, it is an Alexandrov space and can be attained as the limit as \(q\) tends to infinity of the smooth spherical \(n\)-manifolds \(\{x\in \R^{n+1} : |x_1|^q + \cdots + |x_{n+1}|^q = 1\}\), each of which has non-negative curvature.

Another reason that Alexandrov spaces are significant, is that certain general characteristics or properties that we generally associate with smoothness - or smooth structures - in fact hold in a more general context. This is somewhat akin to observing that at a local extremum of a smooth function from \(\R\) to itself, this function admits a horizontal tangent line. With a careful definition of tangent line, this is true in a more general setting. Similarly, in this case we have that (\ref{limit}) is not only true for Riemannian manifolds (with their smooth metric), but also in a broader set of spaces. 
Studying geometry in a `weaker' setting gives insight into both the properties in question, and the smooth structures themselves.

The first variation formula, in particular, is a fundamental property of distances in Alexandrov space, and has found numerous applications in geometry. We name a few applications here. In \cite{shiohama1996cut}, the authors use the first variation formula to study cut-loci on spheres in Alexandrov space. 
In the  paper \cite{perelman1994quasigeodesics}, the authors use this theorem to prove that the length of convex curves is preserved when taking limits of Alexandrov spaces under suitable conditions. 
Another consequence of note is that the first variation formula allows one to introduce a metric on the space of directions emanating from a point to a compact set, which is fundamental in studying tangent cones (see \cite{busemann1970recent}). 
Finally, in \cite{petrunin} a version of the first variation formula 
is used to prove a deep glueing theorem for Alexandrov spaces with boundary (informally: if \(X\) and \(Y\) are Alexandrov spaces with boundary and of curvature \(\geq k\), then gluing \(X\) and \(Y\) along their boundaries produces an Alexandrov space of curvature \(\geq k\)).

Our own motivation is twofold. It is well known (\cite{Veer} and references therein) that if $K$ is a
convex set in $\R^n$, then the derivative of $\ell(t)$ equals the negative cosine of the angle between \(\gamma\) and the distance minimizing path connecting \(\gamma(0)\) to \(K\). The result considered here is a strong generalization of that fact. 
Additionally, the result is crucial for the study of mediatrices; that is, for fixed points \(p,q\in X\), the set \(\{x\in X : d(x,p) = d(x,q)\}\). In \cite{HPV}, the first variation formula (for distance to a point) is used to show that mediatrices on compact Riemannian surfaces have a Lipschitz structure. There is a natural generalization of mediatrices as the equidistant set between disjoint compact subsets, and the result we prove here is a necessary step in extending \cite{HPV} to this case.

\section{Fundamentals of Metric Geometry} 

This section begins with two of the most basic notions of metric geometry: length and comparison configurations; followed by a survey of (upper) angles between geodesics. Although the use of angles has somewhat faded in the modern theory (for generalizations beyond Alexandrov spaces), it can aid geometric intuition, especially to the beginner. The definitions and results presented in this section are well-established, and can also be found in the introductory chapters of \cite{aleksandrovzalgaller}, \cite{bbi2001}, \cite{reshetnyak}, and \cite{bridson}.

\subsection{Length Spaces} 

In order to establish a synthetic geometry in a metric space, we rely on paths in the space to get from one point to another. To that end, the metric needs to align with our intuitive idea of how distance is measured; the distance between any two points is the length of a `straight line' connecting them. Here we build the vocabulary and structure for these kinds of metric spaces, known as length spaces. 

Let \((X,d)\) be a metric space. A path (or curve) in \(X\) is a continuous injective function \(\gamma: [a,b] \to X\) where \([a,b]\) is an interval of \(\R\) (possibly degenerate). We define the length of any path \(\gamma\) as the supremum of the distance along finite partitions of the path:
\[ L(\gamma) = \sup \bigg\{ \sum_{k=1}^{n-1} d\big( \gamma(t_{k}), \gamma(t_{k+1}) \big) : a=t_1 < t_2<\cdots < t_n = b \bigg\} . \]
If for all \(x,y\in X\),
\[ d(x,y) = \inf \{ L(\gamma) : \gamma \text{ is a path connecting } x \text{ and } y \} \]
then the metric \(d\) is said to be intrinsic. A path-connected metric space with an intrinsic metric is known as a \emph{length space}.

Two paths, \(\gamma:[0,T] \to X\) and \(\eta: [0,S] \to X\), which have the same image but are not the same function are said to have different \emph{parameterizations}. A path \(\gamma: [0,T] \to X\) is parameterized by arc-length (or \emph{unit-speed}, for short) if for any \(t,t' \in [0,T]\),
\[ L(\gamma|_{[t,t']}) = |t' - t| . \]
A sequence of paths \(\{\gamma_n\}_{n=1}^\infty\) is said to converge uniformly to a path \(\gamma\) if each \(\gamma_n\) admits a parameterization such that \(\{\gamma_n\}_{n=1}^\infty\) converges uniformly to some parameterization of \(\gamma\).

Finally, a \emph{shortest path} is a unit-speed curve \(\gamma: [0,T] \to X\) such that the length of \(\gamma\) is precisely the distance between its endpoints; \(L(\gamma) = d(\gamma(0),\gamma(T)) = T\). Any curve which is locally a shortest path is known as a \emph{geodesic}. 
In a length space which is both complete and locally compact, we make use of the Hopf-Rinow Theorem, although we will not reference it directly.

\begin{theorem}[Hopf-Rinow]
If \(X\) is a complete and locally compact length space, then every closed and bounded subset of \(X\) is compact; and any two points in \(X\) can be connected by a shortest path.
\end{theorem}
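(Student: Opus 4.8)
The statement is the Hopf--Rinow theorem for length spaces, and the plan is to prove its two clauses in turn, the second using the first. For the first clause, it suffices to show that every closed metric ball $\bar B(x_0,R) = \{x\in X: d(x_0,x)\le R\}$ is compact: any bounded set lies in some such ball, so a closed bounded set is a closed subset of a compact set and hence compact. Fix $x_0$ and set
\[ \rho = \sup\{R\ge 0 : \bar B(x_0,R)\text{ is compact}\}. \]
If $\bar B(x_0,R)$ is compact then so is $\bar B(x_0,R')$ for every $R'<R$ (a closed subset of it), so the set of admissible radii is an interval with right endpoint $\rho$; local compactness gives $\rho>0$, and the goal is $\rho=\infty$.

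Suppose $\rho<\infty$. \textbf{Step 1:} $\bar B(x_0,\rho)$ is compact. As $X$ is complete and the ball is closed, it suffices to show total boundedness. Given $\varepsilon>0$ (with $\varepsilon<\rho$), fix a finite $\varepsilon$-net of the compact ball $\bar B(x_0,\rho-\varepsilon)$. For any $y$ with $d(x_0,y)\le\rho$, the intrinsic metric yields a path from $x_0$ to $y$ of length $<\rho+\varepsilon$; the point $z$ on this path at arc length $\rho-\varepsilon$ (or $y$ itself, if the path is shorter than that) lies in $\bar B(x_0,\rho-\varepsilon)$ and satisfies $d(z,y)<2\varepsilon$, so $y$ is within $3\varepsilon$ of the net. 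Thus $\bar B(x_0,\rho)$ is totally bounded, hence compact. \textbf{Step 2:} cover the compact ball $\bar B(x_0,\rho)$ by finitely many balls $B(y_j,r_j/4)$ whose concentric closed balls $\bar B(y_j,r_j)$ are compact (local compactness), and set $r=\tfrac14\min_j r_j>0$. Using the intrinsic metric once more to slide any point of $\bar B(x_0,\rho+r)$ back into $\bar B(x_0,\rho)$ along a near-shortest path, one verifies $\bar B(x_0,\rho+r)\subseteq\bigcup_j\bar B(y_j,r_j)$, so $\bar B(x_0,\rho+r)$ is a closed subset of a finite union of compact sets and hence compact. But then $\rho+r\le\rho$ by the definition of $\rho$, a contradiction. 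Therefore $\rho=\infty$, proving the first clause.

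For the second clause, fix $p,q\in X$ and put $D=d(p,q)$. Since $d$ is intrinsic, choose for each $n$ a path from $p$ to $q$ of length at most $D+1/n$ and reparametrize it on $[0,1]$ proportionally to arc length, making it $(D+1)$-Lipschitz; every such curve $\gamma_n$ has image in the compact ball $\bar B(p,D+1)$. By Arzel\`a--Ascoli a subsequence converges uniformly to a $(D+1)$-Lipschitz map $\gamma\colon[0,1]\to X$ from $p$ to $q$. Length is lower semicontinuous under uniform convergence (each partition sum for $\gamma$ is a limit of partition sums for the $\gamma_n$), so $L(\gamma)\le\liminf_n L(\gamma_n)=D$, while $L(\gamma)\ge d(p,q)=D$ always; hence $L(\gamma)=D$. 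Reparametrizing $\gamma$ by arc length produces a unit-speed curve of length $D=d(p,q)$, which is injective---if $\gamma(s)=\gamma(t)$ with $s<t$ then $d(p,q)\le s+(D-t)<D$---and so is a shortest path in the sense defined above.

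I expect the crux to be Steps 1 and 2 of the first clause: the two uses of the length-space hypothesis that retract a point of a slightly larger ball onto the already-compact smaller ball along a near-shortest path, together with the bookkeeping of the constants $\varepsilon$ and $r$. The remaining ingredients---Arzel\`a--Ascoli, lower semicontinuity of length under uniform convergence, and the injectivity of a minimal-length unit-speed curve---are routine.
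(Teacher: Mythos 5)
The paper does not prove this theorem: it is quoted as a known classical result (the authors explicitly say they ``will not reference it directly''), so there is no in-paper argument to compare against. Your proof is correct and is essentially the standard one for length spaces (cf.\ Burago--Burago--Ivanov, Theorem 2.5.28): the supremum-of-compact-radii argument with the two ``slide back along a near-shortest path'' steps for the first clause, and Arzel\`a--Ascoli plus lower semicontinuity of length for the second. The only point worth flagging is cosmetic: since the paper defines a path to be injective, your final remark that the arc-length reparametrization of the minimizer is injective is actually needed here, and you supply it correctly.
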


\subsection{Comparison Triangles} 

We denote by \(M^2_k\) the \(2\)-dimensional simply-connected space form\footnote{A space form is a complete Riemannian manifold of constant sectional curvature.} of curvature \(k\), equipped with intrinsic metric \(d_k\) induced by the Riemannian metric. The diameter of the space \(M^2_k\) is denoted \(D_k\) and defined by
\[
D_k = \sup \{ d_k(x,y) : x,y \in M^2_k\} = \begin{cases} \pi/\sqrt{k} & \text{ for } k > 0 \\ \infty & \text{ for } k\leq 0 .\end{cases}
\]
Given any three points \(x,y,z\in X\) with \(d(x,y) + d(y,z) + d(x,z) < D_k\), we can fix three points \(\bar{x}\), \(\bar{y},\) and \(\bar{z}\) in \(M^2_k\) such that
\begin{equation}\label{triangle sides}
d(x,y) = d_k( \bar{x} , \bar{y} ) , \ d(x,z) = d_k( \bar{x} , \bar{z} ) , \ \text{and } d(y,z) = d_k( \bar{y},\bar{z} ) .
\end{equation}
The points \(\bar{x}\), \(\bar{y}\), and \(\bar{z}\), together with the shortest paths joining them,
form a geodesic triangle in \(M^2_k\), which we call the \emph{comparison triangle} and denote it
\(\overline{\Delta}(x,y,z)\). Such a comparison triangle is unique up to isometry. The interior angle
in the geodesic triangle \(\overline{\Delta}(x,y,z)\) (in \(M^2_k\)) with vertex \(\bar{x}\) is denoted
\(\angle^k_x(y,z)\)
and referred to as the \emph{\(k\)-comparison angle}.

\subsection{Upper Angles}

If \(\gamma:[0,T] \to X\) and \(\eta:[0,S]\to X\) are shortest paths in \(X\) with \(\gamma(0) = \eta(0)\), then for any sufficiently small\footnote{Sufficiently small meaning the inequality \(d(\gamma(0),\gamma(t)) + d(\gamma(0),\eta(s)) + d(\gamma(t),\eta(s)) < 2D_k\) is satisfied. } \(t\in (0,T]\) and \(s\in (0,S]\), we can consider the comparison triangle \(\overline{\Delta}\big(\gamma(0),\gamma(t),\eta(s)\big)\). The \emph{upper angle} between \(\gamma\) and \(\eta\) is defined as
\begin{align*} 
\angle^+_{\gamma(0)}(\gamma,\eta) & = \limsup_{t,s\to0^+} \angle^k_{\gamma(0)} (\gamma(t),\eta(s)) 
\\ & = \lim_{\varepsilon\to 0} \sup \big\{ \angle^k_{\gamma(0)} (\gamma(t),\eta(s)) : 0<s,t\leq \varepsilon \big\} . 
\end{align*}
When it is understood that \(\gamma(0)\) is the point at which we are measuring the angle, the subscript for the vertex is often omitted (i.e. \(\angle^+_{\gamma(0)}(\gamma,\eta) = \angle^+(\gamma,\eta)\) and \(\angle^k_{\gamma(0)}(\gamma(t),\eta(s)) = \angle^k(\gamma(t),\eta(s))\)).

Besides the upper angle, one may also consider the \emph{lower angle} between two shortest paths, which is defined as \(\angle^-(\gamma,\eta) = \liminf_{s,t\to 0^+} \angle^k (\gamma(t),\eta(s))\). If the upper angle and lower angle are equal, then we say the angle exists and denote it by \(\angle(\gamma, \eta)\).
We note that the upper and lower angles are indeed independent of the curvature of the space form chosen (as per \cite{plaut}, all space forms are infinitesimally Euclidean; see also Appendix \ref{sec: inf-eucl}).

The following proposition is commonly referred to as the triangle inequality for angles. The proof given here the same as that found in \cite{bridson}. 

\begin{proposition}\label{triangle inequality angles}
Let \(X\) be a length space and let \(\gamma\), \(\eta\), and \(\sigma\) be shortest paths in \(X\) with \(\gamma(0)=\eta(0)=\sigma(0)\). Then \(\angle^+ (\gamma,\eta) \leq \angle^+(\gamma,\sigma)+\angle^+(\sigma,\eta)\).
\end{proposition}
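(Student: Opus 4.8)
The plan is to argue by Euclidean comparison — legitimate since upper angles do not depend on the comparison curvature — and to reduce the statement to one inequality between comparison angles, obtained by sliding a point along a comparison triangle and invoking connectedness of an interval. Write $p=\gamma(0)=\eta(0)=\sigma(0)$, $\alpha=\angle^+(\gamma,\sigma)$, $\beta=\angle^+(\sigma,\eta)$. If $\alpha+\beta\geq\pi$ there is nothing to prove, so assume $\alpha+\beta<\pi$ and fix $\varepsilon>0$ with $\alpha+\beta+2\varepsilon<\pi$. By definition of the upper angle there is $\delta>0$, which I also take below the lengths of $\gamma,\sigma,\eta$, such that $\angle^0_p(\gamma(u),\sigma(v))\leq\alpha+\varepsilon$ and $\angle^0_p(\sigma(v),\eta(w))\leq\beta+\varepsilon$ whenever $0<u,v,w\leq\delta$. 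It suffices to prove $\angle^0_p(\gamma(t),\eta(r))\leq\alpha+\beta+2\varepsilon$ for all $0<t,r\leq\delta$, since then $\angle^+(\gamma,\eta)=\limsup_{t,r\to0^+}\angle^0_p(\gamma(t),\eta(r))\leq\alpha+\beta+2\varepsilon$ and $\varepsilon$ is arbitrary.

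So fix $t,r\in(0,\delta]$ and let the comparison triangle $\overline{\Delta}(p,\gamma(t),\eta(r))$ have vertices $\bar p,\bar g,\bar e$, with angle $C:=\angle^0_p(\gamma(t),\eta(r))$ at $\bar p$. One first checks $C<\pi$: if $C=\pi$, then $d(\gamma(t),\eta(r))=t+r$, and combining this with $d(\gamma(t),\eta(r))\leq d(\gamma(t),\sigma(v))+d(\sigma(v),\eta(r))$ and the upper bounds on the two distances on the right coming from the $\varepsilon$-estimates via the law of cosines, then dividing by $v$ and letting $v\to0^+$, forces $\cos(\alpha+\varepsilon)+\cos(\beta+\varepsilon)\leq0$, hence $\alpha+\beta+2\varepsilon\geq\pi$, contradicting the choice of $\varepsilon$. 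Now for $\lambda\in[0,1]$ let $\bar q_\lambda\in[\bar g,\bar e]$ be the point with $|\bar g\,\bar q_\lambda|=\lambda\,d(\gamma(t),\eta(r))$ and set $s(\lambda)=|\bar p\,\bar q_\lambda|$; this is continuous, with $s(0)=t$, $s(1)=r$, and $0<s(\lambda)\leq\delta$ (metric balls in $M^2_0$ are convex, and $C<\pi$ keeps $\bar q_\lambda\neq\bar p$). Define the continuous functions
\[
f(\lambda)=d\big(\gamma(t),\sigma(s(\lambda))\big)-|\bar g\,\bar q_\lambda|,\qquad
g(\lambda)=d\big(\sigma(s(\lambda)),\eta(r)\big)-|\bar q_\lambda\,\bar e|.
\]
Since $|\bar g\,\bar q_\lambda|+|\bar q_\lambda\,\bar e|=d(\gamma(t),\eta(r))$, the triangle inequality in $X$ gives $f(\lambda)+g(\lambda)\geq0$ for all $\lambda$; also $f(0)=d(\gamma(t),\sigma(t))\geq0$ and $g(1)=d(\sigma(r),\eta(r))\geq0$.

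Next comes the connectedness step. The sets $F=\{f\geq0\}$ and $G=\{g\geq0\}$ are closed, are nonempty ($0\in F$, $1\in G$), and cover $[0,1]$ (wherever $f<0$ one has $g\geq-f>0$); as $[0,1]$ is connected, $F\cap G\neq\emptyset$. Fixing $\lambda^*\in F\cap G$ and setting $s^*=s(\lambda^*)\in(0,\delta]$, the inequality $f(\lambda^*)\geq0$ means the comparison triangle $\overline{\Delta}(p,\gamma(t),\sigma(s^*))$ has the same two sides at $\bar p$ as the planar triangle $\bar p\,\bar g\,\bar q_{\lambda^*}$ (lengths $t$ and $s^*$) but a longer opposite side; since in $M^2_0$ the angle increases with the opposite side, $\angle^0_p(\gamma(t),\sigma(s^*))\geq\angle\bar g\bar p\bar q_{\lambda^*}$, and likewise $g(\lambda^*)\geq0$ gives $\angle^0_p(\sigma(s^*),\eta(r))\geq\angle\bar q_{\lambda^*}\bar p\bar e$. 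Since $\bar q_{\lambda^*}$ lies in the sector at $\bar p$, the two angles on the right add to $C$, so
\[
\angle^0_p(\gamma(t),\eta(r))=C\ \leq\ \angle^0_p(\gamma(t),\sigma(s^*))+\angle^0_p(\sigma(s^*),\eta(r))\ \leq\ (\alpha+\varepsilon)+(\beta+\varepsilon),
\]
which is exactly what was needed.

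The one place a real idea is required is the choice of the intermediate parameter $s^*$: there is no obvious candidate, and naive choices (a fixed multiple of $t$, or sending $s\to0^+$) do not make the two comparison triangles fit together. Pushing the choice through the intermediate value theorem — so that $\sigma(s^*)$ overshoots $\bar q_{\lambda^*}$ on both sides at once — is precisely what lets the final monotonicity estimate close up. The remaining ingredients (reducing to $M^2_0$, the estimates on a $\delta$-neighborhood, monotonicity of Euclidean angles, convexity of balls, and excluding the flat triangle) are routine, and I expect most of the written-out proof to consist of these rather than of the connectedness core.
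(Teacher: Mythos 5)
Your argument is correct, and it is a genuinely different proof from the one in the paper. The paper argues by contradiction, following Bridson--Haefliger: assuming $\angle^+(\gamma,\eta)$ exceeds the sum by $\varepsilon$, it builds a hinge at $\bar p$ in $M^2_k$ whose angle is strictly \emph{smaller} than a well-chosen comparison angle $\angle^k(\gamma(t),\eta(r))$ (so its opposite side is strictly shorter than $d(\gamma(t),\eta(r))$), splits that hinge at a point $\bar s$ of the opposite side so that both sub-angles strictly exceed the corresponding comparison angles, and concludes that $d(\gamma(t),\eta(r)) > d(\gamma(t),\sigma(s)) + d(\sigma(s),\eta(r))$, violating the triangle inequality in $X$. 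You instead give a direct proof: you work with the honest comparison triangle, slide $\bar q_\lambda$ along its far side, and use connectedness of $[0,1]$ (with $f+g\ge 0$ coming from the triangle inequality in $X$) to produce one intermediate parameter $s^*$ at which \emph{both} actual distances $d(\gamma(t),\sigma(s^*))$ and $d(\sigma(s^*),\eta(r))$ dominate the model sub-segments, whence both sub-angles at $\bar p$ are dominated by comparison angles and $C \le (\alpha+\varepsilon)+(\beta+\varepsilon)$. The two proofs are dual uses of the same two ingredients (monotonicity of the Euclidean angle in the opposite side, and the triangle inequality through a point of $\sigma$); the paper's version is shorter and dodges your degenerate case $C=\pi$ and the continuity of $\lambda\mapsto s(\lambda)$, while yours is direct, exhibits the intermediate point explicitly, and avoids the slightly fiddly choice of a hinge angle strictly wedged between $\angle^k(\gamma(t),\eta(r))$ and $\angle^+(\gamma,\eta)-\varepsilon/3$. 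Your side remarks are all legitimately dischargeable: the reduction to $k=0$ is justified by the paper's own observation that upper angles are independent of the model curvature, and your exclusion of $C=\pi$ (forcing $\cos(\alpha+\varepsilon)+\cos(\beta+\varepsilon)\le 0$ and hence $\alpha+\beta+2\varepsilon\ge\pi$) checks out.
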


\begin{proof}
If \(\angle^+(\gamma,\sigma) + \angle^+(\sigma,\eta)\geq\pi\), then the result is trivial, so we assume that \(\angle^+(\gamma,\sigma) + \angle^+(\sigma,\eta)<\pi\). By way of contradiction, suppose that there is an \(\varepsilon>0\) such that
\begin{equation}\label{eq: triangle contradiction}
\angle^+(\gamma,\eta) > \angle^+(\gamma,\sigma) + \angle^+(\sigma,\eta) + \varepsilon .
\end{equation}
By the definition of \(\limsup\) there is a \(\delta>0\) such that
\begin{align}
\angle^k(\gamma(t),\eta(r)) & > \angle^+(\gamma,\eta) -\varepsilon/3 \quad \text{for some } t,r<\delta \label{eq: triangle 1}
\\ \angle^k(\gamma(t),\sigma(s)) & < \angle^+(\gamma,\sigma) +\varepsilon/3 \quad \text{for all } t,s < \delta \label{eq: triangle 2}
\\ \angle^k(\sigma(s),\eta(r)) & < \angle^+(\sigma,\eta)+\varepsilon/3 \quad \text{for all } s,r<\delta . \label{eq: triangle 3}
\end{align}
Fix \(t\) and \(r\) satisfying (\ref{eq: triangle 1}) and let \(\bar{p},\bar{t},\bar{r}\in M^2_k\) be such that
\(t = d_k(\bar{t},\bar{p})\), \(r = d_k(\bar{r},\bar{p})\), and
\[\angle^k(\gamma(t),\eta(r)) > \theta_{\bar{t},\bar{r}} > \angle^+(\gamma,\eta) -\varepsilon/3 \]
where \(\theta_{\bar{t},\bar{r}}\) is the angle between \(\bar{p}\bar{t}\) and \(\bar{p}\bar{r}\) in \(M^2_k\). The left side of he above ineqality tells us that \(d(\gamma(t),\eta(r)) > d_k(\bar{t},\bar{r})\). Combining the right side of the above inequality with (\ref{eq: triangle contradiction}), we have
\[ \theta_{\bar{t},\bar{r}} > \angle^+(\gamma,\sigma) + \angle^+(\sigma,\eta) + 2\varepsilon/3 . \]
Therefore, we can fix \(\bar{s}\in M^2_k\) along the path \(\bar{t}\bar{r}\) such that
\[ \theta_{\bar{t},\bar{s}} > \angle(\gamma,\sigma)+\varepsilon/3 \quad \text{and} \quad \theta_{\bar{s},\bar{r}} > \angle(\sigma,\eta)+\varepsilon/3 . \]
Set \(s = d_k(\bar{s},\bar{p})\). Since \(d_k(\bar{s},\bar{p})\leq \max\{d_k(\bar{t},\bar{p}),d_k(\bar{r},\bar{p})\} < \delta\), by (\ref{eq: triangle 2}) and (\ref{eq: triangle 3}) we have
\[ \theta_{\bar{t},\bar{s}} > \angle^k(\gamma(t),\sigma(s)) \quad \text{and} \quad \theta_{\bar{s},\bar{r}} > \angle^k(\sigma(s),\eta(r)) . \]
It follows that \(d_k(\bar{t},\bar{s}) > d(\gamma(t),\sigma(s))\) and \(d_k(\bar{s},\bar{r})>d(\sigma(s),\eta(r))\).
Thus, we have
\[d(\gamma(t),\eta(r)) > d_k(\bar{t},\bar{r}) = d_k(\bar{t},\bar{s})+d_k(\bar{s},\bar{r}) > d(\gamma(t),\sigma(s)) + d(\sigma(s),\eta(r)) \]
which contradicts the triangle inequality. 
\end{proof}

\subsection{Two Results for Thin Triangles} 

The next lemma is arguably the crux of this work. As observed above, it is clear that small
triangles in space forms are essentially Euclidean. However, what we need here are the properties of long, thin triangles, that is: triangles with only one small side (and two long sides). The
surprising --- and perhaps counter-intuitive --- fact is that these also behave like Euclidean triangles!

\begin{lemma}\label{lem: cosine inequality}
Let \(X\) be a length space and let \(\gamma:[0,T]\to X\) and \(\eta: [0,S] \to X\) be shortest paths such that \(\gamma(0) = \eta(0)\). Then for fixed \(s\) such that \(0<s<D_k\), 
\[
\lim_{t\to 0^+} \left| \cos\Big(\angle^k (\gamma(t),\eta(s))\Big) - \dfrac{s - d(\gamma(t),\eta(s))}{t}\right| = 0.
\]
\end{lemma}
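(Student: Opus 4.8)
The plan is to reduce everything to the side-lengths of the comparison triangle and then apply the law of cosines in $M^2_k$ separately in each of the three model geometries.

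First I would fix $p = \gamma(0) = \eta(0)$ and, for $t \in (0,T]$, set $a = d(p,\gamma(t)) = t$ and $c = c(t) = d(\gamma(t),\eta(s))$, while writing $b = s = d(p,\eta(s))$; the second and third equalities hold because $\gamma$ and $\eta$ are unit-speed shortest paths. For $t$ small enough (here $s < D_k$ is used) the comparison triangle $\overline{\Delta}(p,\gamma(t),\eta(s)) \subseteq M^2_k$ is defined, and $\theta(t) := \angle^k(\gamma(t),\eta(s))$ is its angle at $\bar p$, opposite the side of length $c$. Put $\delta = \delta(t) = b - c = s - d(\gamma(t),\eta(s))$; the triangle inequality gives $|\delta| \le a = t$, so $c(t) \to s$ as $t \to 0^+$ and $\delta(t)/t$ stays in $[-1,1]$. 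The claim then becomes $\cos\theta(t) - \delta(t)/t \to 0$.

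For $k = 0$ this is immediate algebra: the Euclidean law of cosines gives $\cos\theta = (a^2+b^2-c^2)/(2ab)$, and since $b^2 - c^2 = \delta(2b-\delta)$ one finds $\cos\theta - \delta/a = a/(2b) - \delta^2/(2ab)$, where both terms tend to $0$ because $b = s$ is fixed and $\delta^2 \le a^2$. For $k \neq 0$ I would run the same computation with the law of cosines in $M^2_k$: with $\kappa = \sqrt{|k|}$, this reads $\cos(\kappa c) = \cos(\kappa a)\cos(\kappa b) + \sin(\kappa a)\sin(\kappa b)\cos\theta$ when $k > 0$, and $\cosh(\kappa c) = \cosh(\kappa a)\cosh(\kappa b) - \sinh(\kappa a)\sinh(\kappa b)\cos\theta$ when $k < 0$ (the two cases being identical up to the obvious substitutions). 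Solving for $\cos\theta$, substituting $c = b - \delta$, applying the angle-addition formula, and Taylor-expanding in $t$ while using $|\delta| \le t$ throughout, the numerator becomes $\kappa\,\delta\,\sin(\kappa b)$ plus an error of size $O(t^2)$ and the factor $\sin(\kappa a)\sin(\kappa b)$ becomes $\kappa\,a\,\sin(\kappa b)$ plus an error of size $O(t^3)$; the crucial cancellation is $\cos(\kappa\delta) - \cos(\kappa a) = O(t^2)$, which holds precisely because $|\delta| \le a = t$. Since $0 < s < D_k$ forces $\sin(\kappa s) > 0$ (and $\sinh(\kappa s) > 0$ is automatic), the denominator stays bounded below by a constant multiple of $t$ for small $t$, and dividing and then subtracting $\delta/a$ leaves a quantity of size $O(t) \to 0$.

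The proof has no conceptual difficulty; the step I expect to demand the most care is the remainder-term bookkeeping in the non-flat cases. Two points deserve attention: it is exactly the hypothesis that the triangle is thin, in the quantitative form $|s - d(\gamma(t),\eta(s))| \le t$, that makes the cross terms (the $\delta^2$ in the flat case, the $O(t^2)$ and $O(t^3)$ errors in the others) negligible against the $O(t)$ main term; and the constraint $s < D_k$ is what guarantees $\sin(\kappa s) > 0$, so that division by $\sin(\kappa a)\sin(\kappa b)$ is legitimate. A case-free alternative would be to carry out the same expansion using the generalized sine and cosine functions (the solutions of $f'' + kf = 0$) in a unified law of cosines for $M^2_k$, but the three-case version above needs no extra notation.
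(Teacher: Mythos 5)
Your proposal is correct and follows essentially the same route as the paper: the Euclidean case is the identical law-of-cosines algebra exploiting \(|s-d(\gamma(t),\eta(s))|\le t\), and the curved cases likewise rest on the law of cosines in \(M^2_k\), differing only in that you organize the expansion via the angle-addition formula and explicit \(O(t)\) bookkeeping where the paper uses product-to-sum identities and \(\sin(x)/x\) limits. If anything, your error-term accounting is slightly more careful, since it avoids writing \(\lim_{t\to 0^+}(s-d)/t\) as though that limit were known to exist.
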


\begin{proof}
We first look at \(k=0\) and summarize the proof found in \cite[Lemma 4.5.5]{bbi2001}.
For simplicity of notation, let \(\theta = \angle^k (\gamma(t),\eta(s))\) and \(d = d(\gamma(t),\eta(s))\). Recall that \(t=d(\gamma(0),\gamma(t))\) and \(s=d(\gamma(0),\eta(s))\). Employing the Euclidean law of cosines, we find
\[
d^2=s^2+t^2-2st\cos ( \theta ) .
\]
A trivial computation confirms that
\[
\left| \cos ( \theta ) - \dfrac{s-d}{t}\right|=\left| \dfrac{s-d}{t}\cdot \dfrac{d-s}{2s} +
\dfrac{t}{2s}\right|.
\]
By the triangle inequality, $|s-d|=|d-s| \leq t$, which gives the desired result. 

We next consider the case \(k>0\). If we radially project
the triangle with sides of lengths $s$, $t$, and $d$ to the unit-sphere, we can use
the spherical law of cosines to derive
\begin{align*} 
\cos(\theta) & = \frac{\cos(d\sqrt{k}) - \cos(t\sqrt{k}) \cos(s\sqrt{k})}{\sin(t\sqrt{k})\sin(s\sqrt{k})} 
\\ & = \frac{\cos(d\sqrt{k}) - \cos(s\sqrt{k})}{\sin(t\sqrt{k})\sin(s\sqrt{k})} + \frac{\cos(s\sqrt{k}) (1 - \cos(t\sqrt{k}))}{\sin(t\sqrt{k})\sin(s\sqrt{k})} . 
\end{align*} 
Recall from the trigonometric relations that 
\begin{align*} 
\cos(d\sqrt{k})-\cos(s\sqrt{k}) & = 2\sin\bigg(\frac{(s+d)\sqrt{k}}{2} \bigg)  \sin\bigg(\frac{(s-d)\sqrt{k}}{2}\bigg) , \\ 
1 - \cos(t\sqrt{k}) & = 2\sin^2\bigg( \frac{t\sqrt{k}}{2} \bigg) , \\ 
\text{ and } \sin(t\sqrt{k}) & = 2\sin\bigg( \frac{t\sqrt{k}}{2} \bigg) \cos\bigg( \frac{t\sqrt{k}}{2} \bigg) . 
\end{align*}
Combining all of the above, we get 
\[ \cos(\theta) = \bigg( \frac{\sin\big(\frac{(s+d)\sqrt{k}}{2} \big)}{\sin(s\sqrt{k})} \bigg) \bigg( \frac{2\sin\big(\frac{(s-d)\sqrt{k}}{2}\big)}{\sin(t\sqrt{k})} \bigg) + \frac{\cos(s\sqrt{k})\sin\big( \frac{t\sqrt{k}}{2} \big)}{\sin(s\sqrt{k})\cos\big( \frac{t\sqrt{k}}{2} \big)}. \] 
Note that \(d \to s\) as \(t\to 0\). Using the limit of $\frac{\sin x}{x}$, we find 
\begin{align*} 
& \lim_{t\to 0^+} \frac{\sin\big(\frac{(s+d)\sqrt{k}}{2} \big)}{\sin(s\sqrt{k})} = 1 , \\ 
& \lim_{t\to 0^+} \frac{2\sin\big(\frac{(s-d)\sqrt{k}}{2}\big)}{\sin(t\sqrt{k})} = \lim_{t\to 0^+} \frac{s-d}{t} , \\ 
\text{and } & \lim_{t\to 0^+} \frac{\cos(s\sqrt{k})\sin\big( \frac{t\sqrt{k}}{2} \big)}{\sin(s\sqrt{k})\cos\big( \frac{t\sqrt{k}}{2} \big)} = 0
\end{align*} 
which gives us \( \lim_{t\to 0^+} \left| \cos (\theta) - \frac{s-d}{t} \right| = 0\). 

The proof for \(k<0\) follows from the relationships \(\cos(ix) = \cosh(x)\) and \(\sin(ix) = i\sinh(x)\) and is very similar. It can be found in \cite[p. 11]{alexandrov51} and \cite[Lemma 4.1]{shiohama}. 
\end{proof}


\begin{lemma}
If \(X\) is a length space, then for all shortest paths \(\gamma:[0,T]\to X\) and \(\eta:[0,S]\to X\) with \(\gamma(0)=\eta(0)\), for every fixed \(s>0\), we have
\[\limsup_{t\to 0^+} \angle^k (\gamma(t),\eta(s)) \leq \angle^+ (\gamma,\eta) .
\]
\label{lem:strong angle}
\end{lemma}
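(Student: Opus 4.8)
The plan is to control the comparison angle at the fixed parameter $s$ by comparison angles whose second argument is small, so that the defining $\limsup$ of $\angle^+(\gamma,\eta)$ can be brought to bear. Write $L := \limsup_{t\to 0^+}\angle^k(\gamma(t),\eta(s))$; we may assume $L>0$, since otherwise the inequality is immediate. The only geometric input is that $\eta$ is a unit-speed shortest path: for $0<\sigma<s$ we have $d(\eta(\sigma),\eta(s)) = s-\sigma$, so the triangle inequality in $X$ gives $d(\gamma(t),\eta(s))\le d(\gamma(t),\eta(\sigma)) + (s-\sigma)$, and hence, for every $t>0$,
\[
\frac{s - d(\gamma(t),\eta(s))}{t}\ \ge\ \frac{\sigma - d(\gamma(t),\eta(\sigma))}{t}.
\]

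Next I would convert each side into the cosine of a comparison angle. Applying Lemma \ref{lem: cosine inequality} once with the fixed value $s$ and once with the fixed value $\sigma$, the two difference quotients above agree with $\cos\bigl(\angle^k(\gamma(t),\eta(s))\bigr)$ and $\cos\bigl(\angle^k(\gamma(t),\eta(\sigma))\bigr)$ respectively, up to errors that vanish as $t\to 0^+$. Subtracting, this yields
\[
\cos\bigl(\angle^k(\gamma(t),\eta(\sigma))\bigr)\ \le\ \cos\bigl(\angle^k(\gamma(t),\eta(s))\bigr) + o(1)\qquad(t\to 0^+),
\]
where the $o(1)$ is allowed to depend on the (now fixed) $\sigma$. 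Taking $\limsup_{t\to 0^+}$ removes the error term, and since $\cos$ is continuous and strictly decreasing on $[0,\pi]$ while every comparison angle lies in $[0,\pi]$, we obtain $\liminf_{t\to 0^+}\angle^k(\gamma(t),\eta(\sigma))\ \ge\ L$ for every $\sigma\in(0,s)$.

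Finally, because $\angle^+(\gamma,\eta)$ is the $\limsup$ of $\angle^k(\gamma(t),\eta(\sigma))$ as $(t,\sigma)\to(0^+,0^+)$ jointly, it dominates the iterated quantity $\limsup_{\sigma\to 0^+}\bigl(\liminf_{t\to 0^+}\angle^k(\gamma(t),\eta(\sigma))\bigr)$, which by the previous paragraph is at least $L$. Hence $\angle^+(\gamma,\eta)\ge L = \limsup_{t\to 0^+}\angle^k(\gamma(t),\eta(s))$, as claimed. (When $k>0$ one should also note that $s<D_k$ is needed for the comparison angles to be defined; for $t$ small all triangles above satisfy the perimeter bound in the footnote, since $d(\gamma(t),\eta(\sigma))\le t+\sigma$ and $d(\gamma(t),\eta(s))\le t+s$.)

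The obstacle worth flagging is precisely the interplay of the two limits: Lemma \ref{lem: cosine inequality} supplies the approximation $\cos\angle^k\approx(\,\cdot\,-d)/t$ only for each \emph{fixed} second argument as $t\to 0^+$, with no uniformity as that argument shrinks. This is why one cannot substitute a coupled choice $\sigma=\sigma(t)\to 0$ directly, and must instead let $t\to 0^+$ first with $\sigma$ held fixed and only afterward send $\sigma\to 0^+$ — the joint $\limsup$ defining $\angle^+$ is exactly what bridges the two one-variable limits. Equivalently, one can package this as a diagonal argument: pick $t_n\to 0^+$ realizing $L$, apply Lemma \ref{lem: cosine inequality} for each $\sigma=1/m$ to get a threshold, and then choose $\sigma_n\to 0^+$ decaying slowly enough that the error terms along $(t_n,\sigma_n)$ still vanish.
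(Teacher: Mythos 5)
Your argument follows the same route as the paper's: the triangle inequality along the unit-speed path \(\eta\) gives \(s-d(\gamma(t),\eta(s))\ge\sigma-d(\gamma(t),\eta(\sigma))\), Lemma \ref{lem: cosine inequality} converts both sides into cosines of comparison angles up to \(o(1)\) as \(t\to0^+\), and monotonicity of cosine together with the fact that the joint \(\limsup\) defining \(\angle^+(\gamma,\eta)\) dominates the iterated one closes the argument. Your remarks about the order of the two limits and about the perimeter bound when \(k>0\) are also on point.

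There is, however, one intermediate assertion that is wrong as stated: from the pointwise-in-\(t\) inequality \(\cos\bigl(\angle^k(\gamma(t),\eta(\sigma))\bigr)\le\cos\bigl(\angle^k(\gamma(t),\eta(s))\bigr)+o(1)\) you conclude \(\liminf_{t\to0^+}\angle^k(\gamma(t),\eta(\sigma))\ge L\). The operation that actually works is taking \(\liminf_{t\to0^+}\) of both sides of the cosine inequality, which (since \(\liminf\cos=\cos\limsup\) for angles in \([0,\pi]\)) yields \(\cos\bigl(\limsup_{t\to0^+}\angle^k(\gamma(t),\eta(\sigma))\bigr)\le\cos(L)\), i.e. \(\limsup_{t\to0^+}\angle^k(\gamma(t),\eta(\sigma))\ge L\). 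The \(\liminf\) version you state would require comparing the angle at \(\sigma\) along one sequence of \(t\)'s with the angle at \(s\) along a different sequence, which a pointwise inequality in \(t\) does not permit; if both angles oscillate as \(t\to0^+\) the claim can fail. This is a repairable slip rather than a structural gap: the \(\limsup\) version is all you need, because \(\angle^+(\gamma,\eta)\ge\limsup_{\sigma\to0^+}\limsup_{t\to0^+}\angle^k(\gamma(t),\eta(\sigma))\ge L\). With that one correction your proof coincides with the paper's.
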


\begin{proof} If \(s' < s\), then by the triangle inequality
\[ s-s' \geq d \big( \gamma(t),\eta(s) \big) - d \big( \gamma(t),\eta(s') \big) \]
which gives us 
\[ s - d \big( \gamma(t),\eta(s) \big) \geq s' - d \big( \gamma(t),\eta(s') \big) . \]
Substituting this into Lemma \ref{lem: cosine inequality}, we see that
\[ \liminf_{t\to 0^+} \cos \big( \angle^k (\gamma(t),\eta(s)) \big) \geq  \liminf_{t\to 0^+} \cos \big( \angle^k (\gamma(t),\eta(s')) \big) . \]
As cosine is nonincreasing on \([0,\pi]\), we have
\[ \cos \bigg( \limsup_{t\to0^+} \angle^k (\gamma(t),\eta(s)) \bigg) \geq \cos\bigg( \limsup_{s,t \to 0^+} \angle^k (\gamma(t),\eta(s)) \bigg) . \]
The right hand equals $\cos ( \angle^+(\gamma,\eta) )$.
\end{proof}

\section{Alexandrov Spaces}\label{alexandrov spaces} 

Here we give an account of metric spaces which exhibit bounded curvature in the sense of Alexandrov, which most nearly resemble Riemannian maifolds with bounded sectional curvature. The idea of bounded curvature in a metric space is certainly not unique to Alexandrov spaces; another common example is that developed by Busemann for non-positive curvature (see for example \cite{papadopoulos}). Even recently, new characterizations for spaces of bounded curvature, such as in \cite{jost1} and \cite{jost2}, have given rise to further possibilities in the application of metric geometry.

\subsection{Bounded Curvature}

A length space \(X\) is said to be of \emph{curvature bounded above} (or \emph{curvature \(\leq k\)}) if there is a \(k\in \R\) for which the following holds: At every point in \(X\) there is a neighborhood \(U\) such that for every geodesic triangle \(\Delta\subseteq U\) with comparison triangle \(\overline{\Delta}\subseteq M_k^2\),
\begin{equation}\label{curvature bound}
d(u,v) \leq d_k(\bar{u},\bar{v})
\end{equation}
for all \(u,v\in \Delta\) and their comparison points \(\bar{u},\bar{v} \in \overline{\Delta}\) (see Figure \ref{fig:comparison-triangles}).
Similarly, \(X\) is said to be of \emph{curvature bounded below} (or \emph{curvature
\(\geq k\)}) if \(d(u,v) \geq d_k(\bar{u},\bar{v})\).
In either case, the neighborhood \(U\) is referred to as a \emph{region of bounded curvature}.

\begin{figure}[h!] 
\begin{center}
\includegraphics[width=0.80\linewidth]{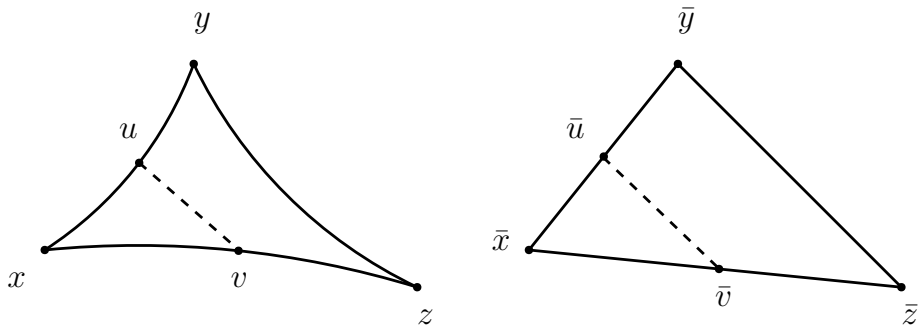}
\caption{A geodesic triangle with vertices \(x\), \(y\), and \(z\) in a length space (left) and the respective comparison triangle \(\overline{\Delta}(x,y,z)\) in \(M^2_0\) (right). The points \(\bar{u}\) and \(\bar{v}\) are chosen to satisfy \(d(x,u) = d_k(\bar{x},\bar{u})\) and \(d(x,v) = d_k(\bar{x},\bar{v})\). }
\label{fig:comparison-triangles}
\end{center}
\end{figure}

An \emph{Alexandrov space} is a complete and locally compact length space with curvature bounded either above or below. 
It should be noted that this definition of Alexandrov space (which comes from \cite{shiohama}) is not necessarily the uniformly accepted definition, but is necessary for our main theorem. In \cite{bbi2001}, an Alexandrov space is simply a length space with curvature bounded above or below. Due to the Hopf-Rinow Theorem, our additional requirement that the space be complete and locally compact allows us to avoid continually addressing the existence of shortest paths in the hypothesis of every proposition; however, it does limit the scope of some of the following preliminary results. In any case, a more general treatment of spaces of bounded curvature (in the sense of Alexandrov) can be found in \cite{bbi2001} or \cite{bridson}.

\subsection{Properties of Alexandrov Spaces} 

It is well known that for Alexandrov spaces, the angle between two geodesics emanating from a common point always exists.

\begin{lemma}\label{angle exists}
Let \(X\) be an Alexandrov space. If \(\gamma:[0,T]\to X\) and \(\eta:[0,S]\to X\) are shortest paths with \(\gamma(0) = \eta(0)\) then the angle \(\angle(\gamma,\eta)\) exists and
\[ \angle(\gamma,\eta) = \lim_{t\to 0^+} \angle^k (\gamma(t),\eta(t)) . \]
\end{lemma}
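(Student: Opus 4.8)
The plan is to prove that the upper and lower angles between $\gamma$ and $\eta$ coincide, and that their common value is realized as the limit of comparison angles along the diagonal $t \mapsto \angle^k(\gamma(t),\eta(t))$. The key structural input is the curvature bound: near the common basepoint $p = \gamma(0) = \eta(0)$ we work inside a region of bounded curvature, so every geodesic triangle with vertices near $p$ has comparison angles controlled in one direction by the curvature inequality (\ref{curvature bound}).

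First I would show that the function $t \mapsto \angle^k(\gamma(t),\eta(t))$ is \emph{monotone} (nonincreasing for curvature $\geq k$, or the corresponding statement for curvature $\leq k$) in a neighborhood of $0$. The mechanism is the standard Alexandrov convexity argument: given $0 < t' < t$, compare the comparison triangle $\overline{\Delta}(p,\gamma(t),\eta(t))$ with the subtriangle cut off at parameter $t'$. The curvature bound forces the comparison point for $\gamma(t')$ on the side $\overline{p}\,\overline{\gamma(t)}$ to be at least (resp. at most) the actual distance $d(\gamma(t'),\eta(t'))$ away from the comparison point for $\eta(t')$; feeding this into Lemma~\ref{lem: cosine inequality}-type monotonicity of comparison angles in the space form yields the claimed monotonicity of $\angle^k(\gamma(t),\eta(t))$. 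Since a bounded monotone function has a limit, $\lim_{t\to 0^+}\angle^k(\gamma(t),\eta(t))$ exists; call it $\alpha$.

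Next I would show $\alpha = \angle^+(\gamma,\eta)$. The inequality $\alpha \leq \angle^+(\gamma,\eta)$ is immediate since the diagonal sequence $(t,t)$ is among the pairs $(t,s)\to(0^+,0^+)$ over which we take the limsup. For the reverse, given any sequence $(t_n,s_n)\to (0,0)$, assume without loss of generality $s_n \leq t_n$ (the other case is symmetric). I would bound $\angle^k(\gamma(t_n),\eta(s_n))$ above by something converging to $\alpha$: again using the curvature inequality on the triangle $\overline{\Delta}(p,\gamma(t_n),\eta(s_n))$ versus the diagonal triangle at the smaller parameter, together with the monotonicity already established and the fact that shrinking a side of a space-form triangle toward the basepoint moves the opposite angle monotonically. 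This shows $\limsup_{t,s\to 0^+}\angle^k(\gamma(t),\eta(s)) \leq \alpha$, hence $\angle^+(\gamma,\eta) \leq \alpha \leq \angle^-(\gamma,\eta) \leq \angle^+(\gamma,\eta)$, so all are equal to $\alpha$ and the angle exists.

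The main obstacle is making the monotonicity/comparison step fully rigorous while handling \emph{both} curvature bounds (above and below) uniformly, since the inequalities reverse and one must be careful that the relevant triangles actually lie inside a single region of bounded curvature and that the perimeter stays below $2D_k$ so the comparison angles are defined. A secondary technical point is the interaction between the curvature inequality (\ref{curvature bound}), which is about distances between comparison points of arbitrary points on the triangle, and the monotonicity of comparison angles as a function of side lengths in $M^2_k$; I would isolate the needed space-form fact (Alexandrov's lemma on adjacent triangles, or a direct computation from the law of cosines as in Lemma~\ref{lem: cosine inequality}) as the computational core and invoke it cleanly rather than re-deriving it inline.
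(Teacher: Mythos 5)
Your proposal is correct and follows essentially the same route as the paper: use the curvature bound to establish monotonicity of the comparison angle $\angle^k(\gamma(t),\eta(s))$ in the parameters (the paper does this one coordinate at a time and then invokes a two-variable monotonicity lemma from its appendix, which is exactly the squeeze $f(m,m)\leq f(t,s)\leq f(M,M)$ you describe for off-diagonal pairs). The only cosmetic difference is that you organize the argument around the diagonal limit first; the substance is identical.
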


\begin{proof}
Suppose that \(X\) is of curvature \(\leq k\). Fix \(s\in (0,S]\) and \(a,b\in (0,T]\) such that \(a<b\). We will consider two distinct comparison triangles in \(M^2_k\). For simplicity of notation, we will denote them
\[ \overline{\Delta}(a) := \overline{\Delta}\big(\gamma(a),\gamma(0),\eta(s)\big) \quad \text{ and } \quad \overline{\Delta}(b) := \overline{\Delta}\big( \gamma(b),\gamma(0),\eta(s)\big) . \]

From the definition of \(\overline{\Delta}(a)\) we have (see Figure \ref{fig:monotoneangles})
\[
d_k(\overline{\gamma(a)}, \overline{\eta(s)})=d({\gamma(a)}, {\eta(s)}) .
\]
Let \(\tilde a\) be the comparison point of $\gamma(a)$ in \(\overline{\Delta}(b)\) (as opposed to \(\overline{\gamma(a)}\), which is the comparison point in \(\overline{\Delta}(a)\)). The upper bound $k$ for
the curvature gives
\[
d({\gamma(a)}, {\eta(s)}) \leq d_k(\tilde a, \overline{\eta(s)}) .
\]
Thus\footnote{Note that the left distance is in $\overline{\Delta}(a)$ while the
right distance is in $\overline{\Delta}(b)$.}
\(d_k \big(\overline{\gamma(a)} , \overline{\eta(s)} \big) \leq d_k\big(\tilde{a} ,
\overline{\eta(s)}\big) \), which in turn implies
\[
\angle^k \big(\gamma(a),\eta(s)\big) \leq \angle^k \big( \gamma(b) , \eta(s) \big) .
\]

\begin{figure}[h!] 
\begin{center}
\includegraphics[width=0.90\linewidth]{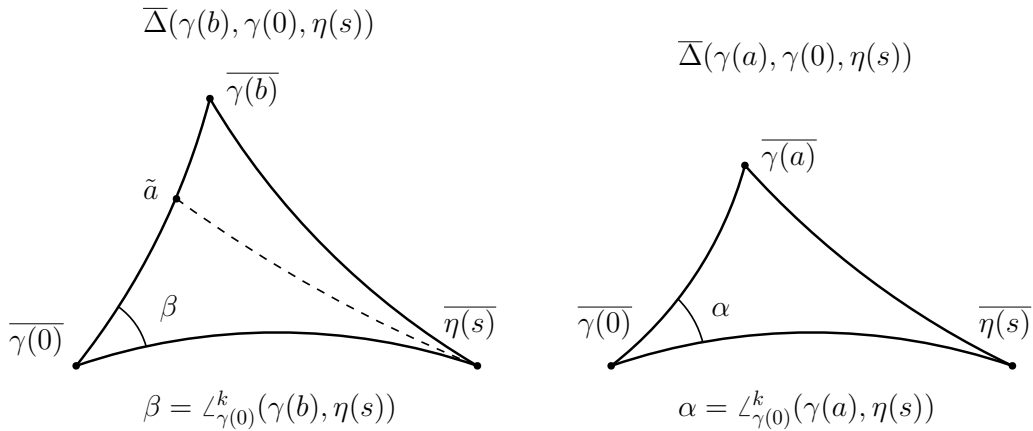}
\caption{An illustration of the comparison triangles \(\overline{\Delta}(\gamma(0),\gamma(b),\eta(s))\) and \(\overline{\Delta}(\gamma(0),\gamma(b),\eta(s))\) from Lemma \ref{angle exists}. }
\label{fig:monotoneangles}
\end{center}
\end{figure}

Thus, for any fixed \(s_0\in (0,S]\), the map \(t \mapsto \angle^k(\gamma(t),\eta(s_0))\) is
monotonically nondecreasing. By the same reasoning the map \(s\mapsto \angle^k(\gamma(t_0),\eta(s))\)
is nondecreasing for any fixed \(t_0\in (0,T]\). It follows from the monotonicity in both
coordinates\footnote{For clarification on monotonicity in functions of two variables, see Proposition \ref{prop: monotone two} of the Appendix.} that
\[ \angle^+(\gamma,\eta) = \limsup_{s,t\to0^+} \angle^k (\gamma(t),\eta(s)) = \liminf_{s,t\to0^+} \angle^k (\gamma(t),\eta(s)) = \angle^-(\gamma,\eta) . \]
We conclude that the angle \(\angle(\gamma,\eta)\) exists and is equal to \(\lim_{t\to 0^+}\angle^k(\gamma(t),\eta(t))\).

If \(X\) is of curvature \(\geq k\), the same method of proof applies, but the inequalities are reversed and the maps \(t\mapsto \angle^k(\gamma(t),\eta(s_0))\) and \(s\mapsto \angle^k(\gamma(t_0),\eta(s))\) are monotonically nonincreasing.
\end{proof}

\begin{corollary}\label{cor: angle condition}
Let \(X\) be an Alexandrov space of curvature \(\leq k\) (resp. \(\geq k\)). If the shortest paths \(\gamma:[0,T]\to X\) and \(\eta:[0,S]\to X\) (with \(\gamma(0) = \eta(0)\)) are contained in a region of bounded curvature, then
\[\angle(\gamma,\eta) \leq \angle^k(\gamma(t),\eta(s)) \quad \big(\text{ resp. } \angle(\gamma,\eta) \geq \angle^k(\gamma(t),\eta(s)) \ \big)\]
for any \(s,t>0\).
\end{corollary}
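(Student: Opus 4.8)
The plan is to combine the monotonicity established in the proof of Lemma~\ref{angle exists} with the convergence statement of that same lemma. Suppose \(X\) has curvature \(\leq k\) and that \(\gamma,\eta\) lie in a region of bounded curvature \(U\). Fix \(s,t > 0\). I would first observe that Lemma~\ref{angle exists} gives \(\angle(\gamma,\eta) = \lim_{\tau \to 0^+} \angle^k(\gamma(\tau),\eta(\tau))\), and more generally that in the curvature \(\leq k\) case the two-variable map \((\tau,\sigma) \mapsto \angle^k(\gamma(\tau),\eta(\sigma))\) is nondecreasing in each coordinate on the region where the relevant comparison triangles are defined (which they are, since everything sits inside \(U\)). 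Hence for any \(\tau \leq \min\{s,t\}\) we have \(\angle^k(\gamma(\tau),\eta(\tau)) \leq \angle^k(\gamma(t),\eta(\tau)) \leq \angle^k(\gamma(t),\eta(s))\), using monotonicity first in the \(\gamma\)-coordinate and then in the \(\eta\)-coordinate.

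Next I would let \(\tau \to 0^+\) in the inequality \(\angle^k(\gamma(\tau),\eta(\tau)) \leq \angle^k(\gamma(t),\eta(s))\); the left side converges to \(\angle(\gamma,\eta)\) by Lemma~\ref{angle exists}, while the right side is a fixed constant. This yields \(\angle(\gamma,\eta) \leq \angle^k(\gamma(t),\eta(s))\), as desired. For the curvature \(\geq k\) case, the proof of Lemma~\ref{angle exists} tells us the monotonicity is reversed (nonincreasing in each coordinate), so the same chain of inequalities runs the other way and gives \(\angle(\gamma,\eta) \geq \angle^k(\gamma(t),\eta(s))\).

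The only real subtlety — and the step I would be most careful about — is making sure all the comparison triangles appearing in the monotonicity argument are actually well-defined and that the monotonicity of Lemma~\ref{angle exists} genuinely applies for the full range \(0 < \tau \leq \min\{s,t\}\) rather than just for small parameters. This is exactly where the hypothesis that \(\gamma\) and \(\eta\) are contained in a region of bounded curvature \(U\) does its work: it guarantees that every geodesic triangle with vertices among \(\gamma(0)\), \(\gamma(\tau)\), \(\eta(\sigma)\) lies in \(U\), so the curvature comparison inequality \(d(u,v) \leq d_k(\bar u,\bar v)\) is available at each stage, and the monotonicity argument from Lemma~\ref{angle exists} goes through verbatim with \(s\) replaced by any \(\sigma \in (0,s]\) and the \(\gamma\)-parameters ranging over \((0,t]\). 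Once that is noted, the corollary is essentially immediate. I would write the argument in a few lines, citing Lemma~\ref{angle exists} for both the monotonicity and the limit formula, and mention the footnote reference to Proposition~\ref{prop: monotone two} if a reminder about two-variable monotonicity is helpful.
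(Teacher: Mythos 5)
Your proof is correct and follows essentially the same route as the paper: both rest on the component-wise monotonicity of \((\tau,\sigma)\mapsto\angle^k(\gamma(\tau),\eta(\sigma))\) established in Lemma~\ref{angle exists}, combined with the limit formula \(\angle(\gamma,\eta)=\lim_{\tau\to0^+}\angle^k(\gamma(\tau),\eta(\tau))\). If anything, your chain \(\angle^k(\gamma(\tau),\eta(\tau))\leq\angle^k(\gamma(t),\eta(\tau))\leq\angle^k(\gamma(t),\eta(s))\) makes explicit the off-diagonal step that the paper's one-line proof leaves implicit.
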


\begin{proof}
By Lemma \ref{angle exists}, if \(X\) is of curvature \(\leq k\) (resp. \(\geq k\)) the map \(t\mapsto \angle^k(\gamma(t),\eta(t))\) is nondecreasing (resp. nonincreasing). It follows immediately that \(\angle(\gamma,\eta)\leq \angle^k(\gamma(t),\eta(s))\) (resp. \(\angle(\gamma,\eta)\geq \angle^k(\gamma(t),\eta(s))\)) for any \(t\in (0,T]\) and \(s\in (0,S]\).
\end{proof}

While spaces of curvature bounded above and below share many properties, the following lemma gives an example of a property of spaces of curvature \(\geq k\) which is not valid in spaces of curvature \(\leq k\). This lemma also makes use of notation we shall need again, so we introduce it here. Let \(\gamma:[0,T]\to X\) be a path and fix \(t\in (0,T)\). The path \(\gamma|_{[t,0]}\) is defined by \(\gamma|_{[t,0]}(s) = \gamma(t-s)\) for \(s\in [0,t]\). In other words, \(\gamma|_{[t,0]}\) is the path that runs backwards along \(\gamma\) from \(\gamma(t)\) to \(\gamma(0)\).

\begin{lemma}\label{supplementary angles}
If \(X\) is an Alexandrov space of curvature bounded below, \(\gamma: [0,T]\to X\) is a shortest path, \(0 < t < T\), and \(\sigma_t:[0,S] \to X\) is a shortest path with \(\sigma_t(0) = \gamma(t)\) then
\[ \angle_{\gamma(t)} \big( \gamma |_{[t,T]} , \sigma_t \big) + \angle_{\gamma(t)} \big( \gamma |_{[t,0]} , \sigma_t \big) = \pi . \]
In other words, adjacent angles along a shortest path sum to \(\pi\).
\end{lemma}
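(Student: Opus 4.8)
The plan is to prove the two inequalities $\angle_{\gamma(t)}(\gamma|_{[t,T]},\sigma_t) + \angle_{\gamma(t)}(\gamma|_{[t,0]},\sigma_t) \leq \pi$ and $\geq \pi$ separately. The first (upper bound) is purely metric and holds in any length space: apply the triangle inequality for upper angles (Proposition \ref{triangle inequality angles}) to the three shortest paths $\gamma|_{[t,0]}$, $\gamma|_{[t,T]}$, and $\sigma_t$ emanating from $\gamma(t)$, obtaining $\angle^+(\gamma|_{[t,0]},\gamma|_{[t,T]}) \leq \angle^+(\gamma|_{[t,0]},\sigma_t) + \angle^+(\sigma_t,\gamma|_{[t,T]})$. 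Since $\gamma$ is a shortest path through $\gamma(t)$, the concatenation of $\gamma|_{[t,0]}$ and $\gamma|_{[t,T]}$ is itself a shortest path, so its "angle" at $\gamma(t)$ is $\pi$; more precisely, for $s,s'$ small, $d(\gamma(t-s),\gamma(t+s')) = s + s'$, which forces $\angle^k_{\gamma(t)}(\gamma(t-s),\gamma(t+s')) = \pi$ in $M^2_k$ (the comparison triangle degenerates to a segment), hence $\angle^+(\gamma|_{[t,0]},\gamma|_{[t,T]}) = \pi$. Combined with the existence of angles in Alexandrov spaces (Lemma \ref{angle exists}), this gives the $\leq \pi$ direction. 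Note this half does not use the curvature bound.

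For the reverse inequality $\geq \pi$, I would use the lower curvature bound via the comparison angle monotonicity. Write $\alpha = \angle_{\gamma(t)}(\gamma|_{[t,T]},\sigma_t)$ and $\beta = \angle_{\gamma(t)}(\gamma|_{[t,0]},\sigma_t)$. By Lemma \ref{angle exists}, these angles exist and equal the limits of the corresponding comparison angles $\angle^k(\gamma(t+s),\sigma_t(r))$ and $\angle^k(\gamma(t-s),\sigma_t(r))$ as the parameters go to $0^+$; moreover in curvature $\geq k$ these comparison angles are monotonically nonincreasing, so $\alpha \leq \angle^k_{\gamma(t)}(\gamma(t+s),\sigma_t(r))$ and $\beta \leq \angle^k_{\gamma(t)}(\gamma(t-s),\sigma_t(r))$ for all small $s,r$. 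Now fix small $s,r$ and look at the three comparison triangles in $M^2_k$ built on the three pairs of points $\{\gamma(t-s),\gamma(t+s)\}$, $\{\gamma(t-s),\sigma_t(r)\}$, $\{\gamma(t+s),\sigma_t(r)\}$ all having the vertex $\gamma(t)$. The key is an "angle gluing / comparison point" argument: since $d(\gamma(t-s),\gamma(t+s)) = 2s = d(\gamma(t-s),\gamma(t)) + d(\gamma(t),\gamma(t+s))$, the point $\gamma(t)$ lies on a shortest path between $\gamma(t-s)$ and $\gamma(t+s)$; lay out in $M^2_k$ the comparison triangle $\overline{\Delta}(\gamma(t-s),\gamma(t+s),\sigma_t(r))$ and mark the comparison point $\bar{m}$ of $\gamma(t)$ on the side $\overline{\gamma(t-s)}\,\overline{\gamma(t+s)}$. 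Then the two angles at $\bar{m}$ on either side of this segment sum to exactly $\pi$. The lower curvature bound applied to the hinge (or directly via the distance inequality $d_k(\bar{m},\overline{\sigma_t(r)}) \leq d(\gamma(t),\sigma_t(r))$, reversed appropriately for lower bounds — here one wants $d_k \geq d$ on comparison points, giving the side from $\bar m$ to $\overline{\sigma_t(r)}$ is long) lets us compare those two angles at $\bar m$ with $\angle^k_{\gamma(t)}(\gamma(t-s),\sigma_t(r))$ and $\angle^k_{\gamma(t)}(\gamma(t+s),\sigma_t(r))$: each of the latter is $\geq$ the corresponding angle at $\bar m$. Hence $\angle^k(\gamma(t-s),\sigma_t(r)) + \angle^k(\gamma(t+s),\sigma_t(r)) \geq \pi$. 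Letting $s,r \to 0^+$ and using that the limits are $\beta$ and $\alpha$ respectively yields $\alpha + \beta \geq \pi$.

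I expect the main obstacle to be the second step: carefully setting up the planar comparison configuration so that the comparison point $\bar m$ of $\gamma(t)$ genuinely lands on the geodesic segment $\overline{\gamma(t-s)}\,\overline{\gamma(t+s)}$ in $M^2_k$, and then correctly tracking the direction of the inequality coming from the lower curvature bound (it is easy to flip a sign). One must verify that $d(\gamma(t),\sigma_t(r)) \geq d_k(\bar m, \overline{\sigma_t(r)})$ where $\bar m$ is this subdivision point — this is exactly the content of the curvature $\geq k$ condition applied to the geodesic triangle with vertices $\gamma(t-s), \gamma(t+s), \sigma_t(r)$ and the two "interior" points $\gamma(t)$ and $\sigma_t(r)$ — and that the sum of the two angles subtended at $\bar m$ equals $\pi$ (since $\bar m$ is interior to a geodesic in $M^2_k$). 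A secondary technical point is ensuring all six points involved lie within a common region of bounded curvature and that all perimeters stay below $2D_k$ so the comparison triangles exist; this is handled by taking $s$ and $r$ sufficiently small, as permitted by the hypothesis that $\gamma$ and $\sigma_t$ are shortest paths based at $\gamma(t)$.
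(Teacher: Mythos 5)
Your overall strategy matches the paper's (triangle inequality for angles for one half, a comparison configuration with the comparison point of \(\gamma(t)\) placed on the side \(\overline{\gamma(t-s)}\,\overline{\gamma(t+s)}\) for the other half), but as written the proof has a genuine gap: the inequality \(\alpha+\beta\leq\pi\) is never established. Your first step derives \(\pi=\angle^+(\gamma|_{[t,0]},\gamma|_{[t,T]})\leq\alpha+\beta\) from Proposition \ref{triangle inequality angles}; that is correct, but it is the \emph{lower} bound \(\alpha+\beta\geq\pi\), not the ``\(\leq\pi\) direction'' as you label it. Your second step then sets out to prove \(\geq\pi\) again, and the comparison argument you sketch is pushed in the wrong direction to do even that: for curvature bounded \emph{below} the defining inequality is \(d(u,v)\geq d_k(\bar u,\bar v)\), so the comparison point \(\bar m\) of \(\gamma(t)\) satisfies \(d_k(\bar m,\overline{\sigma_t(r)})\leq d(\gamma(t),\sigma_t(r))\) --- the comparison side is \emph{short}, not ``long'' as you assert mid-paragraph (you state the correct direction at the beginning and end of the proposal but reverse it exactly where it is used). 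With the short side, the two angles at \(\bar m\), which sum to exactly \(\pi\), each \emph{dominate} the corresponding comparison angles \(\angle^k_{\gamma(t)}(\gamma(t\pm s),\sigma_t(r))\); hence those comparison angles sum to \emph{at most} \(\pi\), and letting \(s,r\to0^+\) (or invoking Corollary \ref{cor: angle condition} directly, since \(\alpha\) and \(\beta\) are bounded above by their comparison angles in curvature \(\geq k\)) gives \(\alpha+\beta\leq\pi\). That is the half you actually need, and it is exactly what the paper's second step does.

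So the repair is small but essential: keep your first step as the proof of \(\alpha+\beta\geq\pi\); in the second step, track the sign of the curvature inequality correctly so that the conclusion reads \(\angle^k_{\gamma(t)}(\gamma(t-s),\sigma_t(r))+\angle^k_{\gamma(t)}(\gamma(t+s),\sigma_t(r))\leq\pi\), and pass to the limit to get \(\alpha+\beta\leq\pi\). As currently written, both halves of your argument assert the same inequality \(\geq\pi\), one correctly and one via a sign error, and the lemma does not follow. Your closing remarks about placing \(\bar m\) on the comparison segment and keeping all points in a region of bounded curvature are the right technical caveats and are handled the same way in the paper.
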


\begin{proof}
By Proposition \ref{triangle inequality angles}, we know that
\[\angle_{\gamma(t)} \big( \gamma |_{[t,T]} , \sigma_t \big) + \angle_{\gamma(t)} \big( \gamma |_{[t,0]} , \sigma_t \big) \geq \angle_{\gamma(t)} \big( \gamma|_{[t,T]}, \gamma|_{[t,0]} \big) = \pi \]
so it suffices to prove the reverse inequality.

Fix a small \(\delta>0\). We will consider a configuration of comparison points in \(M^2_k\) for the points \(\gamma(t-\delta)\), \(\gamma(t)\), \(\gamma(t+\delta)\), and \(\sigma_t(\delta)\).

First, consider the comparison triangle \(\overline{\Delta}(\gamma(t-\delta),\sigma_t(\delta),\gamma(t+\delta)\) with the comparison point \(\overline{\gamma(t)}\). Second, consider the comparison triangle \(\overline{\Delta}(\gamma(t),\sigma_t(\delta),\gamma(t+\delta))\) with the points \(\overline{\gamma(t)}\) and \(\overline{\gamma(t+\delta)}\) aligned as in Figure \ref{fig:supplementary}. Given that each triangle has a vertex representing \(\sigma_t(\delta)\), we have labeled them \(\overline{\sigma_t(\delta)}\) and \(\widehat{\sigma_t(\delta)}\) respectively, to distinguish them.

\begin{figure}[h!] 
\begin{center}
\includegraphics[width=0.6\linewidth]{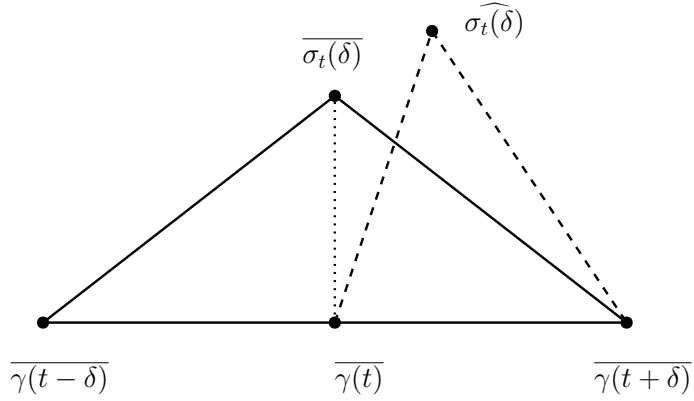}
\caption[width=0.5\linewidth]{The comparison point construction of Lemma \ref{supplementary angles}. }
\label{fig:supplementary}
\end{center}
\end{figure}

By the definition of curvature \(\geq k\), we know that
\[d_k\big( \overline{\sigma_t(\delta)},\overline{\gamma(t)} \big) \leq d_k \big( \widehat{\sigma_t(\delta)},\overline{\gamma(t)} \big) = d\big( \sigma_t(\delta),\gamma(t) \big) . \]
Considering that
\begin{align*} 
& d_k\big( \overline{\sigma_t(\delta)},\overline{\gamma(t+\delta)} \big) = d_k \big( \widehat{\sigma_t(\delta)},\overline{\gamma(t+\delta)} \big) \\ 
\text{and } & d_k\big( \overline{\gamma(t)},\overline{\gamma(t+\delta)} \big) = d_k \big( \overline{\gamma(t)},\overline{\gamma(t+\delta)} \big) , 
\end{align*}
we have an inequality between the interior angles at \(\overline{\gamma(t)}\);
\[ \angle^k_{\gamma(t)} \big(\sigma_t(\delta),\gamma(t+\delta) \big) = \angle^k_{\overline{\gamma(t)}} \big(\widehat{\sigma_t(\delta)},\overline{\gamma(t+\delta)} \big) \leq \angle^k_{\overline{\gamma(t)}} \big( \overline{\sigma_t(\delta)},\overline{\gamma(t+\delta)} \big) . \]
Applying the analogous argument to \(\overline{\Delta}(\gamma(t-\delta),\sigma_t(\delta),\gamma(t))\), we see that
\[\angle^k_{\gamma(t)} \big(\sigma_t(\delta),\gamma(t-\delta) \big) + \angle^k_{\gamma(t)} \big(\sigma_t(\delta),\gamma(t+\delta) \big) \leq \angle^k_{\gamma(t)} \big(\gamma(t-\delta),\gamma(t+\delta) \big) . \]
Taking the limit as \(\delta\to 0^+\) yields the result.
\end{proof}

\begin{proposition}[Semi-continuity of angles]\label{prop: semi-continuity}
Let \(X\) be an Alexandrov space of curvature bounded above (resp. below). Suppose that the sequences of shortest paths \(\{\gamma_n\}_{n=1}^\infty\) and \(\{\sigma_n\}_{n=1}^\infty\), with \(\gamma_n(0) = \sigma_n(0)\) for all \(n\), converge uniformly to shortest paths \(\gamma\) and \(\sigma\) respectively. Then \(\angle (\gamma,\sigma) \geq \limsup_{n\to\infty} \angle (\gamma_n,\sigma_n)\) (resp. \(\angle(\gamma,\sigma) \leq \liminf_{n\to\infty} \angle (\gamma_n,\sigma_n)\)).
\end{proposition}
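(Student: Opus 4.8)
The plan is to combine three earlier results: Corollary \ref{cor: angle condition}, which bounds the actual angle between two shortest paths from above (resp.\ below) by a $k$-comparison angle; the continuity of $k$-comparison angles as functions of the three side lengths; and Lemma \ref{angle exists}, which recovers $\angle(\gamma,\sigma)$ as the limit of $k$-comparison angles as the two short sides shrink. I will carry out the case of curvature bounded above and only remark that the bounded-below case is obtained by reversing every inequality and interchanging $\limsup$ with $\liminf$.

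First I would set the stage near the common basepoint. Since $\gamma_n(0)=\sigma_n(0)$ for all $n$ and both sequences converge uniformly, the limits share the basepoint $p:=\gamma(0)=\sigma(0)$. I would fix a region of bounded curvature $U$ about $p$ and then choose $t_0,s_0>0$ small enough that the initial segments $\gamma|_{[0,t_0]}$ and $\sigma|_{[0,s_0]}$, together with all shortest paths joining a point of the former to a point of the latter, are contained in $U$, and small enough that comparison triangles with these side lengths exist. Uniform convergence then supplies an $N$ such that, for all $n\ge N$, the analogous segments of $\gamma_n$ and $\sigma_n$ and the geodesics between their endpoints also lie in $U$. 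Corollary \ref{cor: angle condition} then applies and gives, for all $n\ge N$, $0<t\le t_0$, and $0<s\le s_0$,
\[ \angle(\gamma_n,\sigma_n) \le \angle^k\big(\gamma_n(t),\sigma_n(s)\big). \]

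Then I would fix $t$ and $s$ as above and send $n\to\infty$. The comparison triangle $\overline{\Delta}\big(\gamma_n(0),\gamma_n(t),\sigma_n(s)\big)$ has side lengths $t$, $s$, and $d\big(\gamma_n(t),\sigma_n(s)\big)$; the first two are independent of $n$ because $\gamma_n$ and $\sigma_n$ are unit-speed, and the third converges to $d\big(\gamma(t),\sigma(s)\big)$ by uniform convergence and the triangle inequality. Because the $k$-comparison angle at a vertex is a continuous function of the three admissible side lengths --- and here $t$ and $s$ are kept fixed and positive, so the configuration stays non-degenerate --- it follows that $\angle^k\big(\gamma_n(t),\sigma_n(s)\big)\to\angle^k\big(\gamma(t),\sigma(s)\big)$. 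Taking $\limsup_{n\to\infty}$ in the displayed inequality yields $\limsup_{n\to\infty}\angle(\gamma_n,\sigma_n)\le\angle^k\big(\gamma(t),\sigma(s)\big)$ for every admissible $t,s$. Finally I would let $t,s\to 0^+$: by Lemma \ref{angle exists} the right-hand side decreases to $\angle(\gamma,\sigma)$, so $\limsup_{n\to\infty}\angle(\gamma_n,\sigma_n)\le\angle(\gamma,\sigma)$, which is the claim.

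The hardest part, I expect, will be the geometric bookkeeping of the first step: one must guarantee that for large $n$ the whole geodesic triangle $\Delta\big(\gamma_n(0),\gamma_n(t),\sigma_n(s)\big)$ --- not only its two short sides --- is contained in a single region of bounded curvature, which is what makes Corollary \ref{cor: angle condition} applicable. I would handle this by choosing $U$ to contain a metric ball $B(p,\rho)$ and then taking $t_0,s_0$ much smaller than $\rho$, so that any such triangle, having perimeter at most $2(t_0+s_0)$, lies well inside $B(p,\rho)$, and then transferring this containment to the approximating paths via uniform convergence. The continuity of $k$-comparison angles used in the second step is routine from the Euclidean, spherical, and hyperbolic laws of cosines (and, as the text notes, does not depend on which space form is used), but I would make a point of noting that no degeneracy occurs precisely because $t$ and $s$ remain fixed and positive while $n\to\infty$.
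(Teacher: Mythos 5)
Your proposal is correct and follows essentially the same route as the paper: bound $\angle(\gamma_n,\sigma_n)$ by the comparison angle $\angle^k(\gamma_n(t),\sigma_n(s))$ via Corollary \ref{cor: angle condition}, pass to the limit in $n$ using the continuity of comparison angles in the side lengths under uniform convergence, and then send $t,s\to 0^+$ using the monotonicity from Lemma \ref{angle exists}. The only difference is cosmetic (you take the limits in the opposite order and are somewhat more explicit about confining the relevant triangles to a region of bounded curvature), so there is nothing substantive to add.
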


\begin{proof}
First, suppose that \(X\) is of curvature \(\leq k\). For any \(t\in [0,T]\), since \(\gamma_n \to \gamma\) uniformly, \(\gamma_n(t) \to \gamma(t)\); and the same can be said for the path \(\sigma\). By Lemma \ref{angle exists} and Corollary \ref{cor: angle condition},
\begin{align}
\angle (\gamma,\sigma) & = \lim_{t\to 0^+} \angle^k_{\gamma(0)} \big( \gamma(t) , \sigma(t) \big) 			 \label{line1}
\\ & = \lim_{t\to 0^+} \bigg( \lim_{n\to\infty} \angle^k_{\gamma_n(0)} \big( \gamma_n(t) , \sigma_n(t) \big) \bigg) 	 \label{line2}
\\ & \geq \lim_{t\to 0^+} \bigg( \limsup_{n\to\infty} \angle ( \gamma_n, \sigma_n) \bigg) . 					 \label{line3}
\end{align}
As the final quantity above is independent of \(t\), we have \(\angle (\gamma,\sigma) \geq \limsup_{n\to\infty} \angle (\gamma_n,\sigma_n)\).

Alternatively, if we suppose that \(X\) is of curvature \(\geq k\). Then (\ref{line1}) and (\ref{line2}) above still hold, but in (\ref{line3}) we make use of the other inequality of Corollary \ref{cor: angle condition} to obtain \(\angle (\gamma,\sigma) \leq \liminf_{n\to\infty} \angle (\gamma_n,\sigma_n)\).
\end{proof}

\section{Right-Derivative of Distance to a Compact Set}\label{sec: derivative} 

In any metric space, it is an easy application of the triangle inequality to show that distance to a set is 1-Lipschitz. 
It follows that distance along a geodesic is differentiable almost everywhere in the domain of the geodesic. In this section, we prove an explicit value for this (one-sided) derivative, when the distance is taken to a compact set. 
Even more than finding the derivative of distance along a geodesic, if we think of the geodesic \(\gamma\) as representing a direction in an Alexandrov space, this formula is akin to the directional derivative of distance in the direction \(\gamma\). This idea serves as a precursor to developing gradients of functions on Alexandrov spaces, which is explored in more detail in \cite[Section 7.4]{plaut}, and in a more general setting of `geometric' metric spaces beyond Alexandrov spaces in \cite{lytchak}.

\begin{lemma}\label{limsup}
If \(X\) is an Alexandrov space, \(\gamma:[0,T]\to X\) is a shortest path, and \(p\) is an element of \(X\) such that \(\gamma(0)\neq p\), then
\[\limsup_{t\to 0^+} \frac{ d(\gamma(t),p) - d(\gamma(0),p)}{t} \leq - \cos \big( \angle_{\min} \big) \]
where \(\angle_{\min}\) is the infimum of angles between \(\gamma\) and shortest paths from \(\gamma(0)\) to \(p\).
\end{lemma}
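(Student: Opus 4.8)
## Proof Proposal

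The plan is to bound the quantity $\frac{d(\gamma(t),p)-d(\gamma(0),p)}{t}$ from above by comparing it, for each fixed small $s>0$, with $-\cos(\angle^k(\gamma(t),\eta(s)))$, where $\eta$ is a shortest path from $\gamma(0)$ to $p$. First I would fix an arbitrary shortest path $\eta:[0,S]\to X$ from $\gamma(0)$ to $p$ (such a path exists by Hopf--Rinow, since $X$ is a complete, locally compact length space). Write $L=d(\gamma(0),p)=S$. The key observation is that $d(\gamma(t),p)\le d(\gamma(t),\eta(s))+d(\eta(s),p)=d(\gamma(t),\eta(s))+(L-s)$ for $0<s<S$, so that
\[
d(\gamma(t),p)-d(\gamma(0),p)\le d(\gamma(t),\eta(s))-s.
\]
Dividing by $t>0$ gives $\frac{d(\gamma(t),p)-d(\gamma(0),p)}{t}\le \frac{d(\gamma(t),\eta(s))-s}{t}=-\frac{s-d(\gamma(t),\eta(s))}{t}$.

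Next I would take $\limsup_{t\to 0^+}$ of both sides and apply Lemma \ref{lem: cosine inequality}, which tells us that $\frac{s-d(\gamma(t),\eta(s))}{t}$ and $\cos(\angle^k(\gamma(t),\eta(s)))$ differ by $o(1)$ as $t\to 0^+$. Hence
\[
\limsup_{t\to 0^+}\frac{d(\gamma(t),p)-d(\gamma(0),p)}{t}\le -\liminf_{t\to 0^+}\cos\big(\angle^k(\gamma(t),\eta(s))\big)=-\cos\bigg(\limsup_{t\to 0^+}\angle^k(\gamma(t),\eta(s))\bigg),
\]
using that cosine is continuous and nonincreasing on $[0,\pi]$. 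By Lemma \ref{lem:strong angle}, $\limsup_{t\to 0^+}\angle^k(\gamma(t),\eta(s))\le \angle^+(\gamma,\eta)$, and since $X$ is an Alexandrov space the angle $\angle(\gamma,\eta)$ exists (Lemma \ref{angle exists}), so $\angle^+(\gamma,\eta)=\angle(\gamma,\eta)$. This yields
\[
\limsup_{t\to 0^+}\frac{d(\gamma(t),p)-d(\gamma(0),p)}{t}\le -\cos\big(\angle(\gamma,\eta)\big).
\]

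Finally, since $\eta$ was an arbitrary shortest path from $\gamma(0)$ to $p$, I would take the infimum over all such $\eta$ on the right-hand side. Because $-\cos$ is increasing, $\inf_\eta\big(-\cos(\angle(\gamma,\eta))\big)=-\cos\big(\sup_\eta\angle(\gamma,\eta)\big)$ — but we want $-\cos(\angle_{\min})$ with $\angle_{\min}=\inf_\eta\angle(\gamma,\eta)$, and $-\cos(\angle_{\min})\le -\cos(\angle(\gamma,\eta))$ for every $\eta$, so the bound $\limsup \le -\cos(\angle_{\min})$ follows a fortiori from the single-$\eta$ estimate applied to any $\eta$ whose angle is close to the infimum; taking a sequence $\eta_n$ with $\angle(\gamma,\eta_n)\to\angle_{\min}$ gives the result by continuity of $\cos$. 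The main obstacle I anticipate is a subtlety of scope rather than difficulty: one must make sure the chosen $\eta$ (and the pair $\gamma,\eta$) lies in a region of bounded curvature so that Lemmas \ref{lem: cosine inequality}, \ref{lem:strong angle}, and \ref{angle exists} all apply for $t$ (and $s$) sufficiently small — but for a fixed endpoint $p$ and fixed $\eta$ this is automatic for small enough $s$ and $t$, so the argument goes through cleanly. There is also the trivial degenerate case where no shortest path issue arises since $\gamma(0)\neq p$ guarantees $S>0$.
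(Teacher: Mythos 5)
Your proposal is correct and follows essentially the same route as the paper: bound the difference quotient by \(-\cos\big(\angle^k(\gamma(t),\eta(s))\big)\) via Lemma \ref{lem: cosine inequality}, pass to the upper angle via Lemma \ref{lem:strong angle} and Lemma \ref{angle exists}, and then let \(\angle(\gamma,\eta)\) run over all shortest paths to reach \(\angle_{\min}\); the only cosmetic difference is that the paper applies Lemma \ref{lem: cosine inequality} directly with \(\eta(S)=p\), whereas you first use the triangle inequality through an intermediate point \(\eta(s)\). One small slip: the line \(\inf_\eta\big({-\cos(\angle(\gamma,\eta))}\big)=-\cos\big(\sup_\eta\angle(\gamma,\eta)\big)\) is backwards (monotonicity gives \(-\cos\) of the \emph{infimum} of the angles), but your subsequent argument with a minimizing sequence \(\eta_n\) and continuity of cosine correctly delivers the stated bound.
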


\begin{proof}
Let \(\eta :[0,S]\to X\) be a shortest path connecting \(\gamma(0)\) to \(p\). Using
Lemma \ref{lem:strong angle} and the fact that \(-\cos\) is nondecreasing on \([0,\pi]\),
\begin{align*} 
-\cos \big( \angle (\gamma,\eta) \big) & \geq \limsup_{t \to 0^+} \bigg( - \cos\big( \angle^k(\gamma(t),p)\big) \bigg) 
\\ & = \limsup_{t\to 0^+} \frac{- d(\gamma(0),p) + d(\gamma(t),p)}{t} , 
\end{align*}
where the last equality comes from Lemma \ref{lem: cosine inequality}. Therefore,
\[ \limsup_{t\to 0^+} \frac{d(\gamma(t),p) - d(\gamma(0),p)}{t} \leq -\cos \big( \angle (\gamma,\eta) \big) . \]
Given that this holds for any shortest path \(\eta\) connecting \(\gamma(0)\) to \(p\), we can replace \(\angle (\gamma,\eta)\) above with \(\angle_{\min}\).
\end{proof}

\begin{lemma}\label{lem:pi angle}
Let \(X\) be an Alexandrov space, \(K\) a compact set in \(X\), and \(\gamma:[0,T] \to X\) a shortest path such that \(\gamma(0)\notin K\). For each \(t\in [0,T]\) let \(\sigma_t\) be a shortest path connecting \(\gamma(t)\) to \(K\). If there is a sequence \(\{t_n\}_{n=1}^\infty\) such that \(t_n\to 0\) and the sequence of shortest paths \(\{\sigma_{t_n}\}_{n=1}^\infty\) converges to \(\sigma_0\), then
\[\limsup_{n\to\infty} \angle^k_{\gamma(t_n)} \big( \gamma(0),\sigma_{t_n}(s) \big) \leq \pi - \angle(\gamma,\sigma_0) \]
for all sufficiently small \(s>0\). (See Figure \ref{fig:firstvar}.)
\end{lemma}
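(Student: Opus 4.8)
The plan is to dominate the comparison angle at $\gamma(t_n)$ by an honest angle between geodesics issuing from $\gamma(t_n)$ --- exploiting that the reversed geodesic $\gamma|_{[t_n,0]}$ is the germ of a shortest path toward $\gamma(0)$ --- and then pass to the limit. For the set-up: since $\gamma(0)\notin K$ we have $\ell_0:=d(\gamma(0),K)>0$ and $d(\gamma(t_n),K)\to\ell_0$, so, fixing a region of bounded curvature $U\ni\gamma(0)$, whenever $s$ is small enough and $n$ is large enough the point $\sigma_{t_n}(s)$ is defined and the two short geodesics $\gamma|_{[t_n,0]}$ and $\sigma_{t_n}|_{[0,s]}$ (both lying within $s+t_n$ of $\gamma(0)$) are contained in $U$; this is precisely the content of ``sufficiently small $s$''. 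Note also that $\sigma_0$ is a shortest path from $\gamma(0)=\lim_n\gamma(t_n)$ to $K$ and that $\angle(\gamma,\sigma_0)$ exists by Lemma \ref{angle exists}.

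\noindent If $X$ has curvature $\geq k$ I would argue in three steps. First, applying Corollary \ref{cor: angle condition} to the shortest paths $\gamma|_{[t_n,0]}$ and $\sigma_{t_n}|_{[0,s]}$, which share the basepoint $\gamma(t_n)$ and lie in $U$,
\[
\angle^k_{\gamma(t_n)}\big(\gamma(0),\sigma_{t_n}(s)\big)\ \leq\ \angle_{\gamma(t_n)}\big(\gamma|_{[t_n,0]},\sigma_{t_n}\big),
\]
the right-hand side depending only on the germs. Second, since $0<t_n<T$ for large $n$, Lemma \ref{supplementary angles} --- the one place the lower curvature bound enters --- rewrites the right side as $\pi-\angle_{\gamma(t_n)}(\gamma|_{[t_n,T]},\sigma_{t_n})$. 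Third, $\gamma|_{[t_n,T]}\to\gamma$ and $\sigma_{t_n}\to\sigma_0$ uniformly with common basepoint $\gamma(t_n)$, so the curvature-$\geq k$ clause of Proposition \ref{prop: semi-continuity} gives $\angle(\gamma,\sigma_0)\leq\liminf_n\angle_{\gamma(t_n)}(\gamma|_{[t_n,T]},\sigma_{t_n})$. Passing to $\limsup_n$ through these three relations yields the conclusion.

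\noindent The main obstacle is the case of curvature $\leq k$, where both structural ingredients used above point the wrong way: adjacent angles along a geodesic need not be supplementary (cf. the remark preceding Lemma \ref{supplementary angles}), and under uniform convergence angles are only \emph{upper} semicontinuous. Here I would instead use that the triangle $\overline\Delta(\gamma(t_n),\gamma(0),\sigma_{t_n}(s))$ degenerates as $t_n\to0$ --- its three sides being $t_n\to0$, $s$, and $d_n:=d(\gamma(0),\sigma_{t_n}(s))$ with $|d_n-s|\leq t_n$, so that the comparison angle at the vertex of $\sigma_{t_n}(s)$ and the triangle's area both tend to $0$, and the two base angles (at $\gamma(t_n)$ and at $\gamma(0)$) sum to $\pi$ in the limit. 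A routine $M^2_k$ computation therefore reduces the claim to the lower bound $\liminf_n\angle^k_{\gamma(0)}(\gamma(t_n),\sigma_{t_n}(s))\geq\angle(\gamma,\sigma_0)$ for the base angle at $\gamma(0)$. From Corollary \ref{cor: angle condition} applied to the germs $\gamma$ and $\sigma_0$ one gets $\angle^k_{\gamma(0)}(\gamma(t_n),\sigma_0(s))\geq\angle(\gamma,\sigma_0)$ for every large $n$, so what remains is to absorb the displacement $\sigma_{t_n}(s)\to\sigma_0(s)$ into this base angle. That is the crux: because the base triangle is itself thin, the displacement influences the comparison angle at $\gamma(0)$ at the scale $t_n$, which is not obviously dominated by the rate of convergence $\sigma_{t_n}\to\sigma_0$; making this quantitative --- presumably through a sharper application of Lemma \ref{lem: cosine inequality} together with uniqueness of short shortest paths in curvature $\leq k$ --- is where I expect the real difficulty to lie.
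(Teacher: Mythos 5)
Your curvature-$\geq k$ argument is exactly the paper's: Corollary \ref{cor: angle condition} to pass from the comparison angle to $\angle_{\gamma(t_n)}(\gamma|_{[t_n,0]},\sigma_{t_n})$, then Lemma \ref{supplementary angles}, then the lower-semicontinuity half of Proposition \ref{prop: semi-continuity}. Your reduction in the curvature-$\leq k$ case is also the paper's: the triangle $\overline{\Delta}(\gamma(t_n),\gamma(0),s_n)$ degenerates, so its angle sum is eventually below $\pi+\delta$ and the apex angle at $s_n$ vanishes, reducing everything to the lower bound $\liminf_n \angle^k_{\gamma(0)}(\gamma(t_n),s_n)\geq\angle(\gamma,\sigma_0)$, where $s_n=\sigma_{t_n}(s)$.

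But that lower bound is precisely where you stop, and what you leave open is the entire content of the hard case, so the proposal has a genuine gap. Your instinct to prove it by perturbing $\angle^k_{\gamma(0)}(\gamma(t_n),\sigma_0(s))$ into $\angle^k_{\gamma(0)}(\gamma(t_n),s_n)$ is indeed doomed for the reason you identify: the short side has length $t_n$, so a displacement of the far vertex by $d(s_n,\sigma_0(s))$ can swing that comparison angle by roughly $d(s_n,\sigma_0(s))/t_n$, and nothing in the hypotheses controls that ratio. The paper sidesteps the perturbation entirely. For each $n$ it introduces a shortest path $\eta_n$ from $\gamma(0)$ to $s_n$ and applies the curvature-$\leq k$ half of Corollary \ref{cor: angle condition} to the pair $(\gamma,\eta_n)$, which share the basepoint $\gamma(0)$; since $s_n$ is the endpoint of $\eta_n$, this gives directly
\[
\angle(\gamma,\eta_n)\ \leq\ \angle^k_{\gamma(0)}\big(\gamma(t_n),s_n\big)
\]
with no dependence on the scale $t_n$. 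It then shows $\angle(\gamma,\eta_n)\to\angle(\gamma,\sigma_0)$: by Corollary \ref{cor: angle condition} applied to $(\eta_n,\sigma_0)$ one has $\angle(\eta_n,\sigma_0)\leq\angle^k_{\gamma(0)}(s_n,\sigma_0(s))\to 0$ (here the relevant triangle has two sides of length about $s$, so the convergence $s_n\to\sigma_0(s)$ suffices), and the triangle inequality for angles (Proposition \ref{triangle inequality angles}), applied in both directions, gives $|\angle(\gamma,\eta_n)-\angle(\gamma,\sigma_0)|\leq 2\angle(\eta_n,\sigma_0)\to 0$. No quantitative rate and no uniqueness of short geodesics is needed. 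If you add this $\eta_n$ device, your outline becomes the paper's proof.
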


\begin{proof}
Begin by fixing \(N\in \N\) and \(s'\in (0,S]\) such that \(\gamma(0)\), \(\gamma(t_n)\), and \(\sigma_{t_n}(s)\) all lie in a region of bounded curvature whenever \(n\geq N\) and \(s\leq s'\). For simplicity of notation let \(s_n = \sigma_{t_n}(s)\) for some fixed \(s>0\).

Suppose \(X\) is of curvature \(\geq k\). Then
\begin{align*}
\limsup_{n\to\infty} \angle^k_{\gamma(t_n)} (\gamma(0),s_n) & \leq \limsup_{n\to\infty} \angle_{\gamma(t_n)} (\gamma|_{[t_n,0]}, \sigma_{t_n})
\\ & = \pi - \liminf_{n\to\infty} \angle_{\gamma(t_n)}(\gamma|_{[t_n,T]},\sigma_{t_n})
\\ &  \leq \pi - \angle (\gamma,\sigma_0)
\end{align*}
by Corollary \ref{cor: angle condition}, Lemma \ref{supplementary angles}, and Proposition \ref{prop: semi-continuity}, respectively.

\begin{figure}[h!] 
\begin{center}
\includegraphics[width=0.8\linewidth]{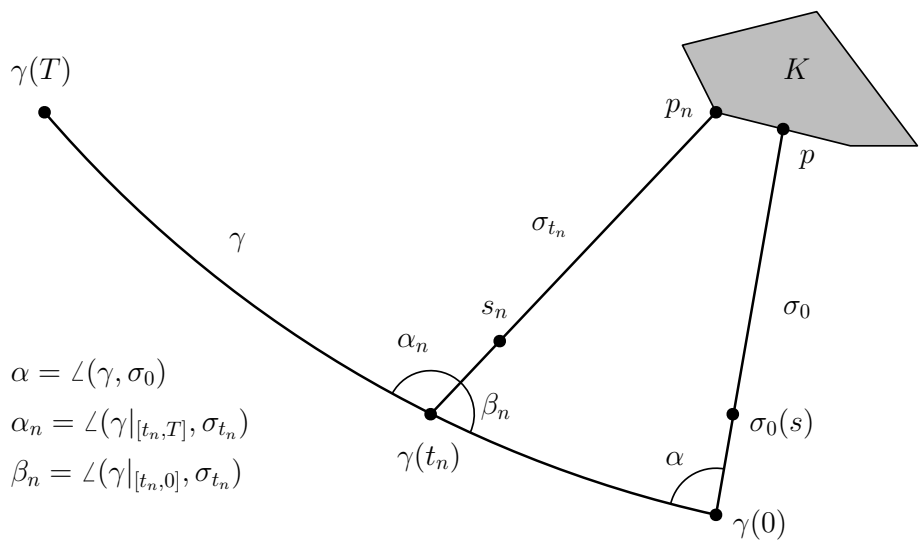}
\caption[width=0.5\linewidth]{An illustration of the paths, points, and angles in the proof of Lemma \ref{lem:pi angle} and Theorem \ref{thm:compact set derivative}. }
\label{fig:firstvar}
\end{center}
\end{figure}

Next, suppose \(X\) is of curvature \(\leq k\).
For each \(n\), let \(\eta_n\) be a shortest path connecting \(\gamma(0)\) to \(s_n\) (see Figure
\ref{fig:firstvar2}). By Corollary \ref{cor: angle condition}, \(\angle(\eta_n,\sigma_0) \leq \angle^k_{\gamma(0)} (s_n,\sigma_0(s))\) for all \(n\). Since \(\sigma_{t_n} \to \sigma_0\), we have \(\angle^k_{\gamma(0)}(s_n,\sigma_0(s)) \to 0\) and so \(\angle(\eta_n,\sigma_0) \to 0\).
By Proposition \ref{triangle inequality angles} twice,
\[ \angle (\gamma,\sigma_0) \leq \angle (\gamma,\eta_n) + \angle (\eta_n,\sigma_0) \leq \angle (\gamma,\sigma_0)  + 2 \angle (\eta_n,\sigma_0) . \]
So, as \(\angle (\eta_n,\sigma_0) \to 0\), we have \(\angle (\gamma,\eta_n) \to \angle (\gamma,\sigma_0)\).
Thus, using Corollary \ref{cor: angle condition} again
\begin{equation}
\angle (\gamma,\sigma_0) = \liminf_{n\to\infty} \angle (\gamma,\eta_n) \leq
\liminf_{n\to\infty} \angle^k_{\gamma(0)} (\gamma(t_n),s_n) .
\label{eq:inequality}
\end{equation}

\begin{figure}[h!] 
\begin{center}
\includegraphics[width=0.60\linewidth]{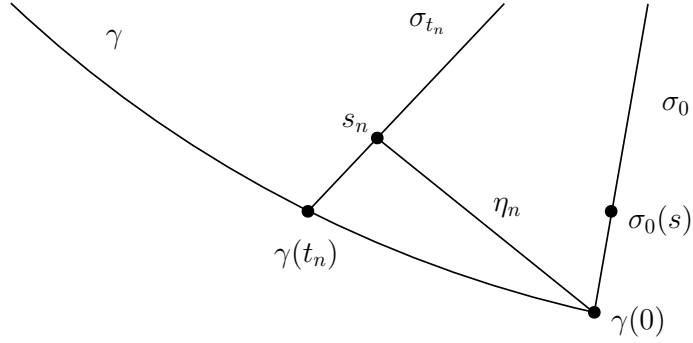}
\caption[width=0.5\linewidth]{The relationship between the paths \(\gamma\), \(\eta_n\), and \(\sigma_0\).}
\label{fig:firstvar2}
\end{center}
\end{figure}

Now let \(\delta >0\) be given. By the fact that infinitesimal triangles are Euclidean, we know
that in the comparison triangle \(\overline{\Delta}(\gamma(t_n),s_n,\gamma(0))\), we have for $n$ large enough
\begin{equation}\label{sum angles}
\angle^k_{\gamma(0)}(\gamma(t_n) , s_n) + \angle^k_{\gamma(t_n)}(\gamma(0) , s_n) + \angle^k_{s_n}(\gamma(t_n) , \gamma(0)) < \pi + \delta .
\end{equation}

Since \(\angle^k_{s_n} (\gamma(t_n),\gamma(0)) \to 0\), with (\ref{eq:inequality}) this gives
\[ \limsup_{n\to\infty}  \angle^k_{\gamma(t_n)}(\gamma(0) , s_n) < \pi + \delta - \liminf_{n\to\infty} \angle^k_{\gamma(0)}(\gamma(t_n),s_n) \leq \pi + \delta - \angle(\gamma,\sigma_0)\]
Letting \(\delta>0\) go to zero gives the desired result.
\end{proof}

\begin{theorem}\label{thm:compact set derivative}
Let \(X\) be an Alexandrov space, \(\gamma:[0,T] \to X\) a shortest path, and \(K\) a compact set not containing \(\gamma(0)\). If \(\ell(t) = d(\gamma(t), K)\), then
\[\lim_{t\to 0^+} \frac{\ell(t) - \ell(0)}{t} = -\cos(\angle_{\min})\]
where \(\angle_{\min}\) is the infimum of angles between \(\gamma\) and any shortest path of length \(\ell(0)\) which connects \(\gamma(0)\) to \(K\).
\end{theorem}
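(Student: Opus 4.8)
The plan is to prove the two one-sided estimates
\[\limsup_{t\to0^+}\frac{\ell(t)-\ell(0)}{t}\le-\cos(\angle_{\min})\qquad\text{and}\qquad\liminf_{t\to0^+}\frac{\ell(t)-\ell(0)}{t}\ge-\cos(\angle_{\min}),\]
which together force the limit to exist and equal $-\cos(\angle_{\min})$. Throughout one uses that $\ell(0)=d(\gamma(0),K)>0$ (since $K$ is compact, hence closed, and $\gamma(0)\notin K$), that $\ell$ is $1$-Lipschitz hence continuous so $\ell(t)\to\ell(0)$, and that by compactness of $K$ and Hopf-Rinow the distance $d(x,K)$ is always realized by some nearest point joined to $x$ by a shortest path.

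\textbf{Upper bound.} Let $p\in K$ be a nearest point to $\gamma(0)$ and let $\eta$ be a shortest path from $\gamma(0)$ to $p$, which has length $\ell(0)$. For every $t$ we have $\ell(t)\le d(\gamma(t),p)$ while $\ell(0)=d(\gamma(0),p)$, so Lemma \ref{limsup} applied to the point $p$, together with the fact that $-\cos$ is nondecreasing on $[0,\pi]$ (used to replace the infimum over shortest paths to $p$ by the single path $\eta$), gives $\limsup_{t\to0^+}\frac{\ell(t)-\ell(0)}{t}\le-\cos(\angle(\gamma,\eta))$. Taking the infimum over all nearest points $p$ and all shortest paths $\eta$ from $\gamma(0)$ to such $p$ --- that is, over all shortest paths of length $\ell(0)$ joining $\gamma(0)$ to $K$ --- and using continuity and monotonicity of $-\cos$ to pass the infimum inside, yields the upper bound $-\cos(\angle_{\min})$.

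\textbf{Lower bound.} Choose $t_n\to0^+$ realizing the $\liminf$. For each $n$ let $q_n\in K$ be a nearest point to $\gamma(t_n)$ and $\sigma_{t_n}\colon[0,\ell(t_n)]\to X$ a unit-speed shortest path from $\gamma(t_n)$ to $q_n$. By compactness of $K$, the Hopf-Rinow theorem (closed bounded sets are compact), and an Arzel\`a--Ascoli argument applied to these uniformly Lipschitz curves, pass to a subsequence so that $q_n\to q\in K$ and $\sigma_{t_n}\to\sigma_0$ uniformly; since $d(\gamma(0),q)=\lim_n d(\gamma(t_n),q_n)=\lim_n\ell(t_n)=\ell(0)=d(\gamma(0),K)$ and length is lower semicontinuous under uniform convergence, $\sigma_0$ is a shortest path of length $\ell(0)$ from $\gamma(0)$ to $K$, so $\angle(\gamma,\sigma_0)\ge\angle_{\min}$. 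Now fix $s>0$ small enough that $s<\ell(t_n)$ for all large $n$, that the comparison triangles below are defined, and that Lemma \ref{lem:pi angle} applies. Since $\sigma_{t_n}$ is a shortest path to $K$, one checks $d(\sigma_{t_n}(s),K)=\ell(t_n)-s$ (immediate from the sub-arc of $\sigma_{t_n}$ beyond $s$ and the triangle inequality through $\sigma_{t_n}(s)$), and then the triangle inequality through $\sigma_{t_n}(s)$ from $\gamma(0)$ gives
\[\ell(0)\le d(\gamma(0),\sigma_{t_n}(s))+(\ell(t_n)-s),\qquad\text{i.e.}\qquad\ell(t_n)-\ell(0)\ge s-d(\gamma(0),\sigma_{t_n}(s)).\]
Set $d_n=d(\gamma(0),\sigma_{t_n}(s))$ and $\theta_n=\angle^k_{\gamma(t_n)}(\gamma(0),\sigma_{t_n}(s))$. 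Applying the law of cosines in $M^2_k$ to the comparison triangle on $\gamma(0),\gamma(t_n),\sigma_{t_n}(s)$, exactly as in Lemma \ref{lem: cosine inequality} (whose error estimate only uses $|s-d_n|\le t_n$, hence is uniform in the moving data and vanishes as $t_n\to0$ with $s$ fixed), gives $\frac{s-d_n}{t_n}\ge\cos\theta_n-\varepsilon_n$ with $\varepsilon_n\to0$. Combining with the previous display, using that $\cos$ is nonincreasing on $[0,\pi]$, then Lemma \ref{lem:pi angle}, and finally $\angle(\gamma,\sigma_0)\ge\angle_{\min}$:
\[\liminf_{t\to0^+}\frac{\ell(t)-\ell(0)}{t}=\lim_n\frac{\ell(t_n)-\ell(0)}{t_n}\ge\liminf_n\frac{s-d_n}{t_n}\ge\liminf_n\cos\theta_n\ge\cos\Big(\limsup_n\theta_n\Big)\ge-\cos(\angle(\gamma,\sigma_0))\ge-\cos(\angle_{\min}).\]

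\textbf{Main obstacle.} I expect the delicate part to be the lower bound: extracting from $\{\sigma_{t_n}\}$ a uniformly convergent subsequence and verifying that the limit $\sigma_0$ is genuinely a shortest path \emph{of length exactly $\ell(0)$ terminating on $K$} --- this is precisely what licenses both the application of Lemma \ref{lem:pi angle} and the inequality $\angle(\gamma,\sigma_0)\ge\angle_{\min}$ --- together with checking that the law-of-cosines estimate of Lemma \ref{lem: cosine inequality}, stated there for a fixed path, survives with the uniform error needed for the moving points $\sigma_{t_n}(s)$.
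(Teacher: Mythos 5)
Your proposal is correct and follows essentially the same route as the paper's proof: the upper bound via Lemma \ref{limsup} applied to a nearest point, and the lower bound via a liminf-realizing sequence $t_n$, an Arzel\`a--Ascoli limit $\sigma_0$ of the shortest paths $\sigma_{t_n}$, the triangle inequality through $\sigma_{t_n}(s)$, Lemma \ref{lem: cosine inequality}, and Lemma \ref{lem:pi angle}. Your extra care about the uniformity of the law-of-cosines error over the moving paths $\sigma_{t_n}$ is a point the paper passes over silently, and your check is right: the error bound depends only on $t_n$ and the fixed $s$.
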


\begin{proof}
First, let \(\eta_0\) be a shortest path connecting \(\gamma(0)\) to \(K\) and let \(a\in K\) be the endpoint of \(\eta_0\). Note that for each \(t>0\), \(\ell(t) \leq d(\gamma(t),a)\). Therefore, by Lemma \ref{limsup},
\[\limsup_{t\to 0^+} \frac{\ell(t) - \ell(0)}{t} \leq \limsup_{t\to 0^+} \frac{ d(\gamma(t),a) - d(\gamma(0),a)}{t} \leq - \cos \big( \angle_{\min} \big) . \]

To get the reverse estimate, let \(\{t_n\}_{n=1}^\infty\) be a sequence in \((0,T]\) such that \(t_n \to 0\) and
\[ \lim_{n\to\infty} \frac{\ell(t_n) - \ell(0)}{t_n} = \liminf_{t\to 0^+} \frac{\ell(t) - \ell(0)}{t} . \]
Similar to Lemma \ref{lem:pi angle}, for each \(n\) let \(\sigma_{t_n}\) be a shortest path connecting \(\gamma(t_n)\) to \(K\). Since \(K\) is compact, the length of each path in the sequence \(\{\sigma_{t_n}\}_{n=1}^\infty\) is uniformly bounded. Therefore, by the Arzela-Ascoli Theorem,\footnote{see Appendix \ref{sec: arzela-ascoli} for clarification on how Arzela-Ascoli is used here.} \(\{\sigma_{t_n}\}_{n=1}^\infty\) contains a subsequence which converges uniformly to a shortest path \(\sigma_0\) connecting \(\gamma(0)\) to \(K\). Without loss of generality, we assume that the sequence \(\{\sigma_{t_n}\}_{n=1}^\infty\) is this uniformly convergent subsequence.

Fix \(s\) sufficiently small to satisfy the hypothesis of Lemma \ref{lem:pi angle}. For simplicity of notation let \(p_n\in K\) be the endpoint of \(\sigma_{t_n}\), let \(p\in K\) be the endpoint of \(\sigma_0\), and let \(s_n = \sigma_{t_n}(s)\) (see Figures \ref{fig:firstvar} and \ref{fig:firstvar2}).
By Lemma \ref{lem: cosine inequality}, 
\begin{equation}\label{eq:cosine}
\liminf_{n\to\infty} \frac{s-d(s_n,\gamma(0))}{t_n} =  \liminf_{n\to\infty} \cos\big( \angle^k_{\gamma(t_n)} (\gamma(0),s_n) \big) .
\end{equation}
Note that \(\ell(t_n) = s + d(s_n,p_n)\) and
\[\ell(0) \leq d(\gamma(0),p_n) \leq d(\gamma(0),s_n) + d(s_n,p_n) .\]
Combining these observations with (\ref{eq:cosine}), we get
\begin{align*} 
\liminf_{n\to\infty} \frac{\ell(t_n) - \ell(0)}{t_n} & \geq \liminf_{n\to\infty} \cos\big( \angle^k_{\gamma(t_n)} (\gamma(0),s_n) \big) 
\\ & = \cos \big( \limsup_{n\to\infty} \angle^k_{\gamma(t_n)} (\gamma(0),s_n) \big) . 
\end{align*}
Then by Lemma \ref{lem:pi angle},
\[ \cos \big( \limsup_{n\to\infty} \angle^k_{\gamma(t_n)} (\gamma(0),s_n) \big) \geq \cos \big( \pi - \angle(\gamma,\sigma_0) \big) = - \cos \big( \angle(\gamma,\sigma_0)\big) .\]
Thus,
\begin{align*} 
\liminf_{t\to 0^+} \frac{\ell(t) - \ell(0)}{t} & = \liminf_{n\to\infty} \frac{\ell(t_n) - \ell(0)}{t_n} 
\\ & \geq - \cos \big( \angle(\gamma,\sigma_0)\big) 
\\ & \geq -\cos(\angle_{\min}) 
\end{align*}
which is the desired reverse estimate.
\end{proof}

\appendix 

\section{Counterexamples} 

\subsection{Upper Angle \(\neq\) Lower Angle}\label{sec: upper lower}

Consider \(\R^2\) with the metric \(d(x,y) = |x_1 - y_1|+|x_2 - y_2|\), known as the \(\ell^1\) metric, or the `taxicab' metric. The space \((\R^2,d)\) is a complete and locally compact length space.

Let \(\gamma:[0,1]\to \R^2\) be defined by \(\gamma(t) = (t,t)\) and let \(\eta:[0,1]\to \R^2\) be defined by \(\eta(s) = (s,0)\). Note that these are both shortest paths with
\[ L(\gamma) = 2 = d\big( (0,0), (1,1) \big) \quad \text{and} \quad L(\eta) = 1 = d\big( (0,0) , (1,0) \big) \]
Since \(\gamma(0) = \eta(0)\), we may consider the upper and lower angle between them.

\begin{figure}[h!] 
\begin{center}
\includegraphics[width=0.60\linewidth]{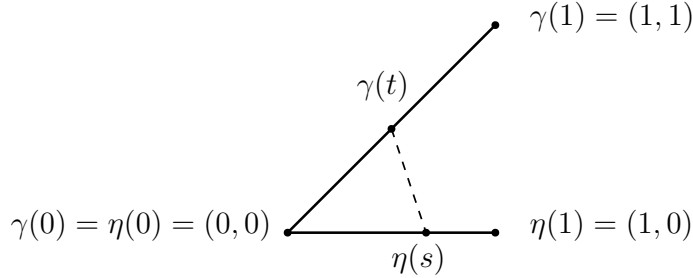}
\caption[width=0.5\linewidth]{The paths \(\gamma\) and \(\eta\). }
\label{fig:L1angle}
\end{center}
\end{figure}

First, let \(s=t\). Then (see Figure \ref{fig:L1angle}),
\[ d\big( \gamma(0),\gamma(t) \big) = 2t \quad \text{and} \quad
d\big( \eta(0),\gamma(t) \big) = d\big( \gamma(t) , \eta(t) \big) = t . \]
Therefore, \(\angle^0 (\gamma(t),\eta(t)) = 0\) for all \(t\). It follows that
\[ \angle^-(\gamma,\eta) = \liminf_{s,t\to 0^+} \angle^0 (\gamma(t),\eta(s)) \leq \lim_{t\to 0^+} \angle^0 (\gamma(t),\eta(t)) = 0 \]
Furthermore, as \(0\) is the minimum possible angle, we have \(\angle^-(\gamma,\eta) = 0\).

Next, consider \(t=s^2\). We have 
\begin{align*} 
& d\big( \gamma(0),\gamma(s^2) \big) = 2s^2  
\\ & d\big( \eta(0),\eta(s) \big) = s 
\\ \text{and } & d\big( \gamma(s^2) , \eta(s) \big) = s-s^2+s^2=s . 
\end{align*}
Note that the (Euclidean) comparison triangle \(\overline{\Delta}(\gamma(0),\gamma(t),\eta(s))\)
is isosceles with a very small base. Using elementary plane geometry, one easily derives that
\( \cos\big(\angle^0(\gamma(s^2),\eta(s)) \big) = s.\)
Since cosine is continuous and nonincreasing on the interval \([0,\pi]\), we see that
\[
\cos\big( \angle^+ (\gamma,\eta) \big)  \leq \lim_{s \to 0^+}
\cos\big( \angle^0(\gamma(s^2),\eta(s)) \big) = \lim_{s \to 0^+} s = 0 .
\]
Thus we have \(\angle^+ (\gamma,\eta) \geq \pi/2\). In fact, we can show that the upper angle equals \(\pi/2\).

If the upper angle were greater than \(\pi/2\), then there would have to be points \(s,t\in [0,1]\) such that \(d(\gamma(t),\eta(s))\) is greater than both \(d(\gamma(0),\gamma(t))\) and \(d(\eta(0),\eta(s))\).
However, this is impossible since \(d(\eta(0),\eta(s)) = s\), \(d(\gamma(0),\gamma(t)) = 2t\), and
\[d(\gamma(t),\eta(s)) = |s-t| + |t-0| = t+|s-t| \]
which cannot be simultaneously greater than \(s\) and \(2t\) for any \(s,t\in [0,1]\).

\subsection{Unbounded Curvature}

Consider again the length space \((\R^2,d)\) from Appendix \ref{sec: upper lower}.
While we could use the result of the previous section combined with Lemma \ref{angle exists} to establish that \((\R^2,d)\) does not have bounded curvature, we instead provide here a direct proof using the definition of bounded curvature given in Section \ref{alexandrov spaces}.

Given any neighborhood \(U\) in \(\R^2\), and any \(k\in \R\), we can find three points \(x,y,z\in U\) such that
\[d(x,y) = d(x,z) = d(y,z) \]
and \(d(x,y)<D_k/2\), which forms an equilateral comparison triangle \(\overline{\Delta}(x,y,z)\subseteq M^2_k\).

\begin{figure}[h!] 
\begin{center}
\includegraphics[width=0.80\linewidth]{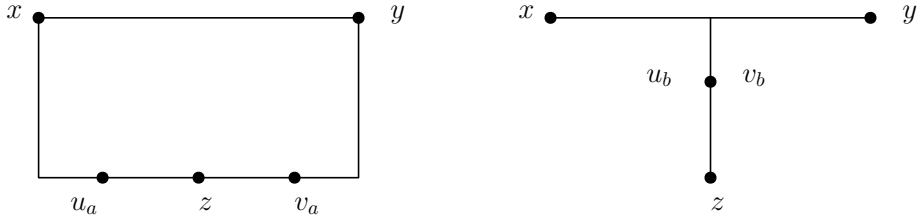}
\caption[width=0.5\linewidth]{The geodesic triangles \(A\) (left) and \(B\) (right) with vertices \(x\), \(y\), and \(z\) in the `taxicab' space \((\R^2 , d)\). }
\label{fig:twotriangles}
\end{center}
\end{figure}

While there are many geodesic triangles in \((\R^2,d)\) with vertices \(x\), \(y\), and \(z\), we will only consider two. First, we choose the shortest paths which form a rectangle around the points and call this triangle \(A\) (left side of Figure \ref{fig:twotriangles}).
We may fix points \(u_a, v_a\in A\) on each side of the point \(z\) such that \(d(u_a,v_a) = d(z,u_a) + d(z,v_a)\). However, in our comparison triangle \(\overline{\Delta}(x,y,z)\), we have
\[d_k(\bar{u}_a,\bar{v}_a)< d_k(\bar{u}_a,\bar{z}) + d_k(\bar{z},\bar{v}_a) = d(u_a,z)+d(z,v_a) = d(u_a,v_a) . \]
Since \(d(u_a,v_a) > d_k(\bar{u}_a,\bar{v}_a)\), the curvature of \((\R^2,d)\) cannot be \(\leq k\).

Second, we consider the geodesic triangle \(B\) consisting of three branching geodesics (right side of Figure \ref{fig:twotriangles}).
Let \(u_b,v_b\in B\) be such that \(d(u_b,z) = d(v_b,z) > 0\) and \(d(u_b,v_b) = 0\). We know that the points \(u_b\) and \(v_b\) exist since the shortest paths connecting \(z\) to \(x\) and \(z\) to \(y\) are branches from a common geodesic. Recalling that our comparison triangle \(\overline{\Delta}(x,y,z)\) is equilateral, we have
\[ d_k(\bar{u}_b,\bar{v_b}) > 0 = d(u_b,v_b) \]
so the curvature of \((\R^2,d)\) cannot be \(\geq k\).
As no neighborhood \(U\) can satisfy the definition of curvature bounded above or below by any \(k\), the space \((\R^2,d)\) is not of bounded curvature.

\subsection{Supplementary Upper Angles May Not Sum to \(\pi\)}\label{sec:triangle-inequ}

If on a geodesic $\gamma:[-T,T]\mapsto X$ where $X$ is Alexandrov with lower bound on the curvature,
we choose 3 nearby points $b=\gamma(-t)$, $a=\gamma(0)$, and $c=\gamma(t)$, then the upper angle
\(\angle^+_{\gamma(0)} \big( \gamma |_{[0,t]} , \gamma |_{[0,-t]}\big)\) equals $\pi$. This follows
immediately from the definition of angles. See Figure \ref{fig:two-angles}.

\begin{figure}[h!] 
\begin{center}
\includegraphics[width=0.60\linewidth]{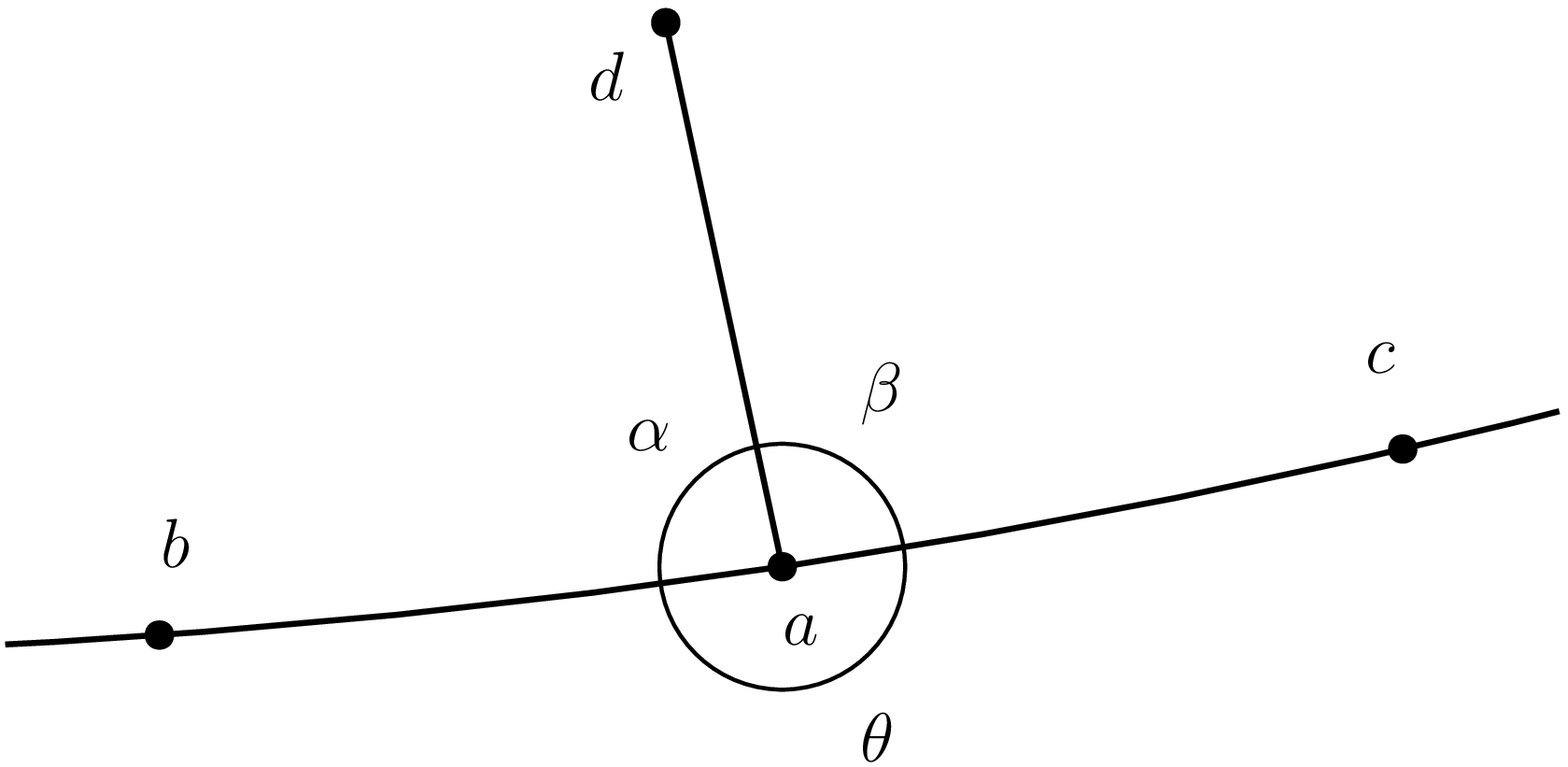}
\caption[width=0.5\linewidth]{ }
\label{fig:two-angles}
\end{center}
\end{figure}

However, spaces without lower bound on the curvature, such as the space \((\R^2,d)\) from Appendix
\ref{sec: upper lower}, have the property that two geodesics that agree on a segment may bifurcate.
Thus, suppose in Figure \ref{fig:two-angles}, $bad$ and $dac$ are geodesics.
By the previous observation all three angles $\alpha$, $\beta$, and $\theta$ are equal to $\pi$,
and we have a counter example to Lemma \ref{supplementary angles}. Notice that it also follows that
if there is a lower bound on the curvature, then geodesics cannot bifurcate.

\section{Results in Analysis}

\subsection{Monotonicity in Functions of Two Real Variables}

Let \(f:\R^2 \to \R\) be a function. We say that \(f\) is \textbf{component-wise monotonic} if for any constant \(r\in \R\), the maps \(x\mapsto f(x,r)\) and \(x\mapsto f(r,x)\) are both nondecreasing or both nonincreasing.
We further define the right-sided limit superior of \(f\) at \(a\) as
\[ \limsup_{x,y\to a^+} f(x,y) = \lim_{\varepsilon\to 0} \sup \{ f(x,y) : a < x,y \leq a+\varepsilon \} . \]
Similarly, the right-sided limit inferior of \(f\) at \(a\) is given by
\[ \liminf_{x,y\to a^+} f(x,y) = \lim_{\varepsilon\to 0} \inf \{ f(x,y) : a < x,y \leq a+\varepsilon \} . \]

\begin{lemma}\label{prop: monotone two}
If \(f:\R^2\to\R\) is component-wise monotonic, then for any \(a\in \R\),
\[\limsup_{x,y\to a^+} f(x,y) = \liminf_{x,y\to a^+} f(x,y) . \]
\end{lemma}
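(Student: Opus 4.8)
The plan is to exploit component-wise monotonicity to squeeze the double limit superior and limit inferior between values of $f$ along the diagonal. Fix $a \in \R$ and, without loss of generality, assume both partial maps $x \mapsto f(x,r)$ and $x \mapsto f(r,x)$ are nondecreasing (the nonincreasing case is symmetric, or follows by replacing $f$ with $-f$). First I would observe that for any $\varepsilon > 0$ and any $x, y$ with $a < x, y \le a + \varepsilon$, monotonicity in each coordinate separately gives
\[
f(a + \tfrac{\varepsilon'}{2}, a + \tfrac{\varepsilon'}{2}) \le f(x,y) \le f(a+\varepsilon, a+\varepsilon)
\]
whenever $\varepsilon' \le \min\{x - a, y - a\}$; more usefully, for a point $(x,y)$ in the box $(a, a+\varepsilon]^2$ we have $f(x,y) \ge f(\min\{x,y\}, \min\{x,y\})$ by lowering each coordinate to the smaller one, and $f(x,y) \le f(\max\{x,y\}, \max\{x,y\})$ by raising each coordinate to the larger one. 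Both $\min\{x,y\}$ and $\max\{x,y\}$ lie in $(a, a+\varepsilon]$, so the diagonal values $f(c,c)$ for $c \in (a, a+\varepsilon]$ bracket every value of $f$ on the box.

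From this bracketing it follows that for each $\varepsilon > 0$,
\[
\sup_{c \in (a,\,a+\varepsilon]} f(c,c) = \sup \{ f(x,y) : a < x, y \le a + \varepsilon \},
\]
and likewise with $\inf$ in place of $\sup$; indeed the left side is at most the right by definition, and at least the right by the upper bracket above. Taking $\varepsilon \to 0$ on both sides turns the right-hand quantities into $\limsup_{x,y\to a^+} f(x,y)$ and $\liminf_{x,y\to a^+} f(x,y)$ respectively, while the left-hand quantities become the one-variable $\limsup_{c \to a^+} f(c,c)$ and $\liminf_{c \to a^+} f(c,c)$. Finally, the map $c \mapsto f(c,c)$ is itself nondecreasing on any interval $(a, a+\varepsilon]$ — since increasing $c$ increases $f$ in each coordinate in turn — and a monotone function of one real variable has a one-sided limit at every point, so $\limsup_{c\to a^+} f(c,c) = \liminf_{c\to a^+} f(c,c) = \lim_{c\to a^+} f(c,c)$. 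Chaining these equalities gives $\limsup_{x,y\to a^+} f(x,y) = \liminf_{x,y\to a^+} f(x,y)$, as desired.

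I expect the only real subtlety to be the bracketing step: one must be careful that reducing or enlarging a coordinate of a point in $(a, a+\varepsilon]^2$ keeps it in that box and changes $f$ monotonically in the claimed direction, which uses monotonicity in one coordinate at a time (holding the other fixed) and then in the second — this is where component-wise (as opposed to joint) monotonicity is exactly what is needed. Everything after that is bookkeeping with suprema and the elementary fact that monotone functions of one variable have one-sided limits.
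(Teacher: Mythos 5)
Your proof is correct and follows essentially the same route as the paper's: both arguments bracket $f(x,y)$ between the diagonal values $f(\min\{x,y\},\min\{x,y\})$ and $f(\max\{x,y\},\max\{x,y\})$ using component-wise monotonicity one coordinate at a time, and then invoke the existence of the one-sided limit of the monotone diagonal map $c\mapsto f(c,c)$. The only cosmetic difference is that you phrase the comparison via suprema and infima over boxes while the paper uses sequences and $\min/\max$ of their terms.
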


\begin{proof}
Let \(f: \R^2 \to \R\) be given and assume that \(f\) is component-wise nondecreasing (the result for nonincreasing follows by symmetry). If we fix \(x_1<x_2\), then by our assumptions,
\[ f(x_1,x_1) \leq f(x_1,x_2) \leq f(x_2 , x_2) . \]
Therefore, for any \(a\in \R\), it follows from the monotonicity of \(x\mapsto f(x,x)\) that
\[ \limsup_{x\to a^+} f(x,x) = \liminf_{x\to a^+} f(x,x) = \lim_{x\to a^+} f(x,x) . \]
Denote the limit by \(f(a)^+\). To finish the proof, it suffices to show that
\[ \limsup_{x,y\to a^+} f(x,y) \leq f(a)^+ \leq \liminf_{x,y\to a^+} f(x,y). \]

Let \(\{x_n\}\) and \(\{y_n\}\) be sequences such that \(x_n,y_n>a\) for all \(n\) and \(x_n,y_n\to a\). For each \(n\), define \(m_n = \min\{x_n,y_n\}\) and \(M_n = \max\{x_n,y_n\}\). Then
\begin{align*} 
\limsup_{n\to\infty} f(x_n,y_n) & \leq \lim_{n\to\infty} f(M_n,M_n) 
\\ & = f(a)^+ 
\\ & = \lim_{n\to\infty} f(m_n, m_n) 
\\ & \leq \liminf_{n\to \infty} f(x_n,y_n) . \qedhere 
\end{align*}
\end{proof}

\subsection{The Arzela-Ascoli Theorem for Paths}\label{sec: arzela-ascoli}

While there are many equivalent statements of the Arzela-Ascoli Theorem, the version which best fits our needs is that found in \cite{bridson}.

\begin{theorem}[Arzela-Ascoli]\label{arzela-ascoli}
If \(X\) is a compact metric space and \(Y\) is a separable metric space, then every sequence of equicontinuous maps \(f_n : Y\to X\) contains a uniformly convergent subsequence.
\end{theorem}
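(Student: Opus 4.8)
The plan is to carry out the classical two-stage argument: first extract, by a diagonal procedure, a subsequence converging at every point of a fixed countable dense set, and then use equicontinuity to upgrade this to uniform convergence.

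For the first stage I would invoke the separability of \(Y\) to fix a countable dense subset \(D = \{y_1, y_2, \ldots\} \subseteq Y\). Since \(X\) is a compact metric space it is sequentially compact, so the sequence \(\{f_n(y_1)\}_n\) has a convergent subsequence indexed by an infinite set \(N_1 \subseteq \N\). Restricting to \(\{f_n(y_2)\}_{n \in N_1}\) and extracting again gives \(N_2 \subseteq N_1\) along which the values at \(y_2\) also converge, and so on inductively. The Cantor diagonal sequence, obtained by taking the \(j\)-th index from \(N_j\), then yields a single subsequence \(\{g_j\}\) such that \(\{g_j(y_k)\}_j\) converges in \(X\) for every \(k\). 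This step is purely formal once separability of \(Y\) and sequential compactness of \(X\) are in hand.

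For the second stage I would prove that \(\{g_j\}\) is uniformly Cauchy, whence uniform convergence follows because \(X\), being compact, is complete. Given \(\varepsilon > 0\), equicontinuity supplies at each point a neighborhood on which every \(g_j\) varies by less than \(\varepsilon/3\); I cover the domain by finitely many such neighborhoods centered at points of \(D\), and then, using that each of the finitely many sequences \(\{g_j(y_k)\}_j\) is Cauchy, a routine \(\varepsilon/3\) estimate gives \(\sup_{y} d(g_j(y), g_i(y)) < \varepsilon\) for all large \(i, j\). The resulting common limit is a well-defined map to which \(\{g_j\}\) converges uniformly.

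The main obstacle, and the delicate point I would flag explicitly, is precisely the passage from convergence on the dense set to genuine uniform convergence: that step requires the domain to be covered by \emph{finitely many} equicontinuity neighborhoods, i.e. to be totally bounded, rather than merely separable. (Separability alone is insufficient here: equicontinuous maps into a compact space need not have a uniformly convergent subsequence on a non-totally-bounded domain.) In every application in this paper the relevant domain is a compact interval \([0,S]\), which is totally bounded, so the required finite subcover is available and the argument goes through without change; I would state this total boundedness hypothesis where it is used in the finite-cover step.
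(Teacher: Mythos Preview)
The paper does not prove this theorem; it is quoted from \cite{bridson} without proof and then invoked only to derive the corollary about paths of uniformly bounded length. So there is no argument in the paper to compare against.

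Your two-stage plan (Cantor diagonal on a countable dense set, then an $\varepsilon/3$ upgrade via equicontinuity) is the standard proof, and the obstacle you flag is real: with $Y$ merely separable, the statement as written is in fact false. Take $Y=\N$ with the discrete metric (countable, hence separable), $X=\{0,1\}$, and $f_n(m)=\delta_{mn}$; this family is trivially equicontinuous, yet no subsequence converges uniformly. What is missing is exactly the total boundedness of $Y$ that you identify, which is what furnishes the finite cover in the $\varepsilon/3$ step. Since the paper's only application is to a compact interval (in the corollary immediately following, and hence in Theorem~\ref{thm:compact set derivative}), the corrected version you outline is precisely what is needed, and your argument goes through there without change.
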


\begin{corollary}
If \(X\) is a compact metric space and \(\{\gamma_n\}_{n=1}^\infty\) is a sequence of paths in \(X\) with uniformly bounded lengths, then the sequence \(\{\gamma_n\}_{n=1}^\infty\) contains a uniformly convergent subsequence.
\end{corollary}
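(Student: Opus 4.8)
The plan is to reduce the statement to the Arzela-Ascoli Theorem (Theorem \ref{arzela-ascoli}) by first rescaling every path to a common domain on which the whole family becomes uniformly Lipschitz. Let \(L\) be a uniform bound for the lengths, so \(L(\gamma_n)\le L\) for all \(n\). Reparametrizing each \(\gamma_n\) by arc length produces a unit-speed curve on \([0,L(\gamma_n)]\) (just a point, in the degenerate case \(L(\gamma_n)=0\)); precomposing with the affine scaling \(u\mapsto L(\gamma_n)\,u\) then yields \(\widehat\gamma_n\colon[0,1]\to X\), a parametrization of \(\gamma_n\) with the same image.

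The key estimate is that each \(\widehat\gamma_n\) is \(L\)-Lipschitz. Indeed a unit-speed curve is \(1\)-Lipschitz, since \(d(\gamma_n(a),\gamma_n(b))\le L(\gamma_n|_{[a,b]})=|b-a|\); hence for \(u,u'\in[0,1]\) we get \(d(\widehat\gamma_n(u),\widehat\gamma_n(u'))\le L(\gamma_n)\,|u-u'|\le L\,|u-u'|\). In particular \(\{\widehat\gamma_n\}\) is an equicontinuous family of maps \([0,1]\to X\).

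Next I would invoke Theorem \ref{arzela-ascoli} with \(Y=[0,1]\) (compact, hence separable) and \(X\) compact by hypothesis: the sequence \(\{\widehat\gamma_n\}\) has a subsequence \(\{\widehat\gamma_{n_j}\}\) converging uniformly to some continuous \(f\colon[0,1]\to X\). Since \(\widehat\gamma_{n_j}\) is a parametrization of \(\gamma_{n_j}\) and \(f\) parametrizes its own image, uniform convergence \(\widehat\gamma_{n_j}\to f\) is, by the definition given in Section 2.1, exactly uniform convergence of the paths \(\gamma_{n_j}\); this is the subsequence we want.

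The step I expect to be the actual crux is organizational rather than deep: Arzela-Ascoli as stated requires all the maps to share one fixed domain, so the affine rescaling of each arc-length parametrization onto \([0,1]\) — and the resulting uniform Lipschitz bound — is what makes the theorem applicable, and it is where the length bound gets used. One minor caveat worth a remark is that the uniform limit \(f\) of continuous maps need not be injective, so to view it as a \emph{path} in the strict sense of this article one needs an additional argument; this is automatic in the applications here (Theorem \ref{thm:compact set derivative}), where the \(\gamma_n\) are shortest paths and lower semicontinuity of length under uniform limits forces \(f\) to again be a shortest path, hence injective.
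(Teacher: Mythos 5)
Your proposal is correct and follows essentially the same route as the paper: reparametrize each path at constant speed on \([0,1]\), observe that the uniform length bound makes the family uniformly Lipschitz (hence equicontinuous), and apply Theorem \ref{arzela-ascoli} with \(Y=[0,1]\) and the compact target \(X\). Your closing remark about the uniform limit possibly failing to be injective is a fair observation the paper glosses over, but it does not change the argument.
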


\begin{proof}
Without loss of generality, we may assume that each \(\gamma_n\) is constant-speed with domain \([0,1]\). Since the length of the paths is uniformly bounded by, say \(M\in \R\), for all \(t,t' \in [0,1]\)
\[ d\big( \gamma_n(t), \gamma_n(t') \big) \leq M |t - t'| \]
so \(\{\gamma_n\}_{n=1}^\infty\) is equicontinuous. As \([0,1]\) is separable and \(X\) is compact, by Theorem \ref{arzela-ascoli} \(\{\gamma_n\}_{n=1}^\infty\) contains a uniformly convergent subsequence.
\end{proof}

\section{A Geometric Observation}

\subsection{Space Forms are Infinitesimally Euclidean}\label{sec: inf-eucl}

\begin{lemma} 
The upper angle is independent of the curvature of \(M^2_k\). 
\end{lemma}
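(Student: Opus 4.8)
The plan is to deduce everything from one quantitative fact: comparison angles in $M^2_k$ and in the Euclidean plane $M^2_0$ agree in the limit of small triangles, with an error controlled by the curvature times the squared perimeter. For a (nondegenerate) geodesic triangle in $M^2_k$ with side lengths $a,b,c$ that is small enough for a comparison triangle to exist, write $\angle^k(a,b,c)$ for the interior angle opposite the side of length $c$, so that in the paper's notation $\angle^k_{\gamma(0)}(\gamma(t),\eta(s))=\angle^k\!\big(t,\,s,\,d(\gamma(t),\eta(s))\big)$. Once I know $\angle^k(a,b,c)-\angle^0(a,b,c)\to 0$ as $a,b\to 0^+$, uniformly, it follows for shortest paths $\gamma,\eta$ with $\gamma(0)=\eta(0)$ that $\sup_{0<t,s\le\varepsilon}\big|\angle^k(\gamma(t),\eta(s))-\angle^0(\gamma(t),\eta(s))\big|\to 0$ as $\varepsilon\to0$, and taking $\varepsilon\to0$ forces $\angle^+(\gamma,\eta)$ computed with $M^2_k$ to equal $\angle^+(\gamma,\eta)$ computed with $M^2_0$; the identical argument with $\liminf$ handles the lower angle. (Equivalently, since $\cos$ is a decreasing homeomorphism of $[0,\pi]$, one may argue throughout with $\cos\angle^k$ and invoke uniform continuity of $\arccos$ at the end.)

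The technical core is the estimate
\[
\big|\cos\angle^k(a,b,c)-\cos\angle^0(a,b,c)\big|\le C\,|k|\,P^2,\qquad P:=\tfrac12(a+b+c),
\]
for an absolute constant $C$. Since $c\le a+b$ this is $O(|k|(a+b)^2)$, which vanishes as $a,b\to0$; applied with $a=t$, $b=s$, $c=d(\gamma(t),\eta(s))$ it gives the uniform closeness required above. To prove it, with $\kappa=\sqrt{|k|}$, I would substitute
\[
x=\tfrac{(a+b-c)\kappa}{2},\ \ y=\tfrac{(b+c-a)\kappa}{2},\ \ z=\tfrac{(c+a-b)\kappa}{2},\ \ p=x+y+z=\tfrac{(a+b+c)\kappa}{2},
\]
so that $a\kappa=x+z$, $b\kappa=x+y$, $c\kappa=y+z$. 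Product-to-sum identities then recast the spherical ($k>0$) law of cosines as $\cos\angle^k(a,b,c)=\dfrac{\sin p\,\sin x-\sin y\,\sin z}{\sin p\,\sin x+\sin y\,\sin z}$, and the hyperbolic ($k<0$) law of cosines as the same expression with $\sin$ replaced by $\sinh$, while $\cos\angle^0(a,b,c)=\dfrac{PX-YZ}{PX+YZ}$ with $X=\tfrac12(a+b-c)$, $Y=\tfrac12(b+c-a)$, $Z=\tfrac12(c+a-b)$ (using $PX+YZ=ab$ and $PX-YZ=\tfrac12(a^2+b^2-c^2)$). These manipulations closely parallel the computation already done in the proof of Lemma~\ref{lem: cosine inequality}.

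Then, writing $\sin p\,\sin x=PX\,\kappa^2\,g$ and $\sin y\,\sin z=YZ\,\kappa^2\,h$ with $g=\tfrac{\sin p}{p}\tfrac{\sin x}{x}$ and $h=\tfrac{\sin y}{y}\tfrac{\sin z}{z}$ (and $\sinh$ in place of $\sin$ when $k<0$), and using $x,y,z,p\le P\kappa$ together with $\big|1-\tfrac{\sin\theta}{\theta}\big|=O(\theta^2)=\big|1-\tfrac{\sinh\theta}{\theta}\big|$, one gets $g,h=1+O(|k|P^2)$; in particular $g,h\ge\tfrac12$ and $|g-h|=O(|k|P^2)$ once $P\sqrt{|k|}$ is small. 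Hence, with $u=PX\ge0$ and $v=YZ\ge0$, the $\kappa^2$'s cancel and
\[
\cos\angle^k(a,b,c)=\frac{ug-vh}{ug+vh},\qquad \cos\angle^0(a,b,c)=\frac{u-v}{u+v},
\]
so the elementary identity $\dfrac{ug-vh}{ug+vh}-\dfrac{u-v}{u+v}=\dfrac{2uv\,(g-h)}{(ug+vh)(u+v)}$, combined with $ug+vh\ge\tfrac12(u+v)$ and $2uv\le\tfrac12(u+v)^2$, yields $\big|\cos\angle^k-\cos\angle^0\big|=O(|g-h|)=O(|k|P^2)$, which is the estimate. (If $u=0$ or $v=0$ the three comparison points are collinear, the angle equals $0$ or $\pi$ in every $M^2_k$, and there is nothing to prove.)

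The main obstacle is making the uniformity honest. A crude Taylor expansion of the law of cosines produces an error term of the shape $O\!\big(|k|(a+b)^4/ab\big)$, which blows up for long thin triangles (one tiny side, or two small sides of very different orders), and no amount of care with such a bound settles the $\limsup$. What makes the argument go through is the exact reorganization via the half-perimeter substitution and the product-to-sum identities: it reveals that the genuine error is $O(|k|P^2)$ with an absolute constant, so thin and fat triangles are treated on the same footing. Carrying this out cleanly — and verifying that the spherical and hyperbolic cases really do run in parallel — is essentially the only substantive work.
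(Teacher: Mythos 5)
Your proof is correct, and it takes a genuinely different route from the paper's. The paper's own ``proof'' of this lemma is really a pair of sketches: it defers the rigorous version to an external reference on the geodesic equation, and then gives an informal rescaling argument based on Gauss's limit definition of curvature; neither is carried out quantitatively. You instead prove a uniform, explicit estimate $\lvert\cos\angle^k(a,b,c)-\cos\angle^0(a,b,c)\rvert\le C\lvert k\rvert P^2$ via the half-perimeter substitution and the half-angle (l'Huilier-type) form of the spherical and hyperbolic laws of cosines, and then pass to the upper and lower angles by uniform continuity of $\arccos$. I checked the algebra: the identities $PX+YZ=ab$ and $PX-YZ=\tfrac12(a^2+b^2-c^2)$, the representation $\cos C=\frac{\sin p\sin x-\sin y\sin z}{\sin p\sin x+\sin y\sin z}$ (and its $\sinh$ analogue), and the rational-function identity with the bounds $ug+vh\ge\tfrac12(u+v)$, $2uv\le\tfrac12(u+v)^2$ all hold, and your handling of the degenerate collinear cases is right. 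Your diagnosis of the real difficulty is also the correct one: a naive expansion of the law of cosines gives an error of order $\lvert k\rvert(a+b)^4/(ab)$, which is useless for the long thin triangles that actually arise in the $\limsup$, whereas the semi-perimeter form yields a bound depending only on the perimeter. What each approach buys: the paper's version is short and appeals to geometric intuition (rescaling and Gaussian curvature), at the cost of rigor and self-containedness; yours is fully self-contained, parallels the trigonometric manipulations already used in Lemma~\ref{lem: cosine inequality}, and produces a reusable quantitative modulus of comparison between $k$-comparison angles for different $k$, which is strictly more information than the qualitative statement of the lemma.
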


\begin{proof} There are many ways of proving this. The first is by using the geodesic
equation to establish that the smaller the domain, the closer a geodesic crossing it resembles
a "straight line". This is made rigorous in \cite{BV} (Proposition 1.10).

Another more informal way is to realize that instead of shrinking a triangle with sides of lengths
$a$, $b$, and $c$ in $M_k^2$ by a factor of, say $\varepsilon$, we might equally well define a new space $M$
by changing coordinates
in $M_k^2$ from $x$ to $\bar x=x/\varepsilon$. Now consider a triangle with those same
lengths $a$, $b$, and $c$ again. Gauss' original definition of (Gaussian) curvature is
\[
k=\lim_{A\rightarrow 0} \dfrac{A'}{A}
\]
where $A$ is the area of a small disk in $M_k^2$ and $A'$ is the area in the unit sphere
swept out by the unit normals in $A$. Clearly, in our rescaled space $A'$ has been shrunk
by a factor $\varepsilon^2$ while $A$ changes very little.
\end{proof}

\bibliography{metricgeometry}
\bibliographystyle{alpha}

\end{document}